\numberwithin{equation}{section}
\renewcommand{\phi}{\varphi}
\newcommand{\eps}{\varepsilon}
\newtheorem{definition}{Definition}[section]
\newtheorem{theorem}{Theorem}[section]
\newtheorem{lemma}{Lemma}[section]
\newtheorem{remark}{Remark}[section]
\title{Effective dynamics\\ of a coupled microscopic-macroscopic  stochastic system
\footnote{This work was partly supported by   NSF of China grants
(No. 10901065, No. 10971225, and No. 11028102), the NSF Grants  1025422 and
0731201,
 and the Cheung Kong Scholars Program.
  } }
\author{Jian Ren$^1$, Hongbo Fu$^1$, Daomin Cao$^2$ and Jinqiao Duan$^{3, 1}$      \\
\\
1. School of Mathematics and Statistics\\Huazhong University of Science and Technology \\
  Wuhan 430074, China\\\emph{E-mail: renjian0371@163.com} \\
2. Institute of Applied Mathematics\\Chinese Academy of
Sciences\\Beijing 100190, China\\
3. Department of Applied Mathematics\\ Illinois Institute of Technology \\
 Chicago, IL 60616, USA \\
\emph{E-mail: duan@iit.edu}   \\
     }
\begin{document}
\date{\today}

\maketitle

\pagestyle{plain}

\begin{abstract}  A conceptual model for microscopic-macroscopic slow-fast stochastic systems is considered.
A dynamical reduction procedure is presented in order to extract effective dynamics for this kind of systems.  Under
appropriate assumptions, the effective system is shown to approximate the original system, in the sense of a probabilistic convergence.

\medskip

%{\bf Short Title:}     \\

 {\bf Key Words}: Macroscopic-microscopic system,    stochastic partial differential equations, averaging principle, effective dynamics, slow-fast scales

{\bf Mathematics Subject Classifications (2000)}:   60H15, 60F10,

%60G17

\end{abstract}

\section{Motivation}  \label{intro}
%%%%%%%%%%%%%%%%%%%%%%%%%%%%%%%%%%%%%%%%%%%%%%%%%%%%%%%%%%%%%%%%%

 In modeling complex phenomena in biomedical, geophysical,   and chemical systems, we sometimes encounter      microscopic-macroscopic stochastic systems. These are systems of coupled stochastic ordinary and partial differential equations (SDEs and SPDEs). The SPDEs describe the macroscopic dynamics while SDEs for the microscopic dynamics. For example,   angiogenesis is a vital process in human tissue growth and wound healing. This process involves   the growth of new blood vessels from pre-existing vessels where blood cells    penetrate into growing tissue, supplying nutrients and oxygen and removing waste products \cite{Clauss}. During the process, blood cells interact with the tissue mass randomly. Here the blood cells may be regarded as ``particles" while tissue may be described by a ``density" quantity.  We consider a conceptual microscopic-macroscopic stochastic system where the microscopic component is composed of finite number of ``particles" and the macroscopic component is about ``densities" evolution of a finite number of substances. Both particles and substances are interacting randomly or are interacting in a random environment.

More specifically, and for simplicity, we assume that there are only two particles whose positions are at $\xi(t)$ and $\eta(t)$, and assume also that there are two substances with densities $u(x, t)$ and $v(x, t)$, respectively. Here both $\xi$ and $\eta$ satisfy a system of SDEs, and $u$ and $v$ are described by a system of SPDEs. The SDEs and SPDEs are coupled, due to the impact of particles on density evolution. Furthermore, we suppose that $\xi$ and $u$ evolve slowly, but $\eta$ and $v$ progress much faster. We are interested in deriving an effective model for this coupled stochastic system, hopefully involve   only slow variables $\xi$ and $u$.

First, in \S \ref{model}, we   consider a simpler coupled microscopic-macroscopic system when the fast density $v$ is absent and no external noise acting directly on $u$:
\begin{eqnarray}
\begin{cases}
  u_t = u_{xx} + f(u, \xi), \quad u(x, 0)=u_0(x),  \\
 \dot{\xi} = b(\xi, \eta) + \sigma_3(\xi) \dot W_t, \quad \xi(0)=x_0 ,  \\
  \dot{\eta} = \eps^{-1} B(\xi, \eta) + \eps^{-\frac12} \sigma_4(\xi, \eta) \dot W_t, \quad
  \eta(0)=y_0,\\
  u(0, t)=u(1, t)=0,
\end{cases}
\end{eqnarray}
where $t\in [0, T]$, $x\in [0, 1]$,   $\eps $ is a small positive parameter, and
$W_t$ is a standard scalar Brownian motion. The
coefficients $f, b, B, \sigma_3, \sigma_4$ all satisfy Lipschitz and boundedness assumptions. In this
part, $\xi$ is slow component and $\eta$ fast component.
We derive an effective model involves $u$ and $\xi$ only. This result is summarized in \textbf{Theorem \ref{effective1}}.

Then in \S \ref{model2}, we consider a more complex coupled microscopic-macroscopic
stochastic system
\begin{eqnarray}
\begin{cases}
u_t = u_{xx} + f(u,v, \xi) + \sigma_1(u)\dot W_t^1, \quad u(x, 0)=u_0(x),  \\
v_t=\frac{1}{\varepsilon}(v_{xx}+g(u,v,\xi))+\frac{1}{\sqrt{\varepsilon}}
\sigma_2(u,v)\dot W_t^2, \quad v(x, 0)=v_0(x),\\
\dot{\xi} = b(\xi, \eta) + \sigma_3(\xi) \dot W_t^3, \quad \xi(0)=x_0,   \\
\dot{\eta} = \eps^{-1} B(\xi, \eta) + \eps^{-\frac12} \sigma_4(\xi,
\eta)\dot W_t^3, \quad \eta(0)=y_0,\\
u(0, t)= u(1, t)=0,\quad v(0, t)=v(1, t)=0,
\end{cases}
\end{eqnarray}
where $t\in[0, T]$, $x\in [0, 1]$,  $\eps$ is a small positive parameter, and
$\{W_t^i\}_{t\geq 0}, i=1, 2, 3$ are independent scalar
Brownian motions. The coefficients   $f, g, b, B, \sigma_i's$  satisfy some assumptions. In this
setting, $\xi$ and $u$ are slow components while $\eta$ and $v$ fast
components.  The first two equations are macroscopic components coupled
with   the latter two equations for the microscopic components.
We derive an effective model involving only $u$ and $\xi$,  and the result is stated in \textbf{Theorem \ref{Theo}}.

%%%%%%%%%%%%%%%%%%%%%%%%%%%%%%%%%%
%%%%%%%%%%%%%%%%%%%%%%%%%%%%%
%%%%%%%%%%%%%%%%%%%%%%%%
\section{A stochastic microscopic-macroscopic model}    \label{model}

First we consider the following coupled SPDE-SDE system
\begin{eqnarray}\label{coupled}
\begin{cases}
  u_t = u_{xx} + f(u, \xi), \quad u(x, 0)=u_0(x),  \\
 \dot{\xi} = b(\xi, \eta) + \sigma_3(\xi) \dot{W}_t, \quad \xi(0)=x_0 ,  \\
  \dot{\eta} = \eps^{-1} B(\xi, \eta) + \eps^{-\frac12} \sigma_4(\xi, \eta) \dot{W}_t, \quad
  \eta(0)=y_0,\\
  u(0, t)=u(1, t)=0,
\end{cases}
\end{eqnarray}
for $t\in [0, T]$, $x\in [0, 1]$,  where $\eps $ is a small positive
parameter, $W$ is a standard scalar Brownian motion, the coupling
term $f$ :$\mathbb{R}\times\mathbb{R}\rightarrow \mathbb{R} $
satisfies the condition: Globally Lipschitz in $u$ and $\xi$ with
Lipschitz constant $K^f$, and $b(\xi, \eta)$, $B(\xi, \eta)$,
$\sigma_3(\xi)$, $\sigma_4(\xi, \eta)$ are all globally Lipschitz and
bounded.

As in \cite{FW}, p.268, the slow-fast SDEs for $(\xi, \eta)$ above have the following averaged effective dynamical description.

We introduce a random process $\eta^{\xi}(t)$,      defined by the
stochastic differential equation for fixed $\xi \in \mathbb{R}$,
\begin{eqnarray}
\dot \eta^{\xi}(t)=B(\xi, \eta^{\xi}(t))+\sigma_4(\xi,
\eta^{\xi}(t)) \dot W_t, \quad \quad \eta^{\xi}(0)=y_0.
\end{eqnarray}
For any $t\geq 0$ and any $\xi\in \mathbb{R}$,
we  assume that there exists a function $\bar b(\xi)$,
such that  \\
$$\mathbb{E}\big|\frac{1}{T}\int_t^{t+T}b(\xi, \eta^{\xi}(s))ds-\bar b(\xi)\big|<\chi(T),$$
where the non-negative upper bound function $\chi(T)\rightarrow 0$ as $T\rightarrow \infty.$
Then there is an averaged effective model
\begin{eqnarray} \label{averaged}
 \dot{\bar{\xi}}(t) &=& \bar{b}(\bar\xi(t)) + \sigma_3(\bar{\xi}(t)) \dot{W}_t,
 \quad \bar{\xi}(0)=x_0.
\end{eqnarray}
It follows from \cite{FW} that  $\sup\limits_{0\leq t\leq
T}\mathbb{E}|\xi-\bar\xi|^2\rightarrow 0$  and
$\lim\limits_{\varepsilon\rightarrow
0}\mathbb{P}\big\{\sup\limits_{t\in[0,T]}|\xi-\bar\xi|>\delta\big\}=0.$\\

 Now we consider the following effective system for the original microscopic-macroscopic system \eqref{coupled}:
\begin{eqnarray}\label{effective}
\begin{cases}
  \bar{u}_t = \bar{u}_{xx} + f(\bar{u}, \bar{\xi}), \quad \bar{u}(x, 0)=u_0(x),  \\
 \dot{\bar{\xi}} = \bar{b}(\bar\xi) + \sigma_3(\bar{\xi}) \dot{W}_t, \quad
 \bar{\xi}(0)=x_0.
\end{cases}
\end{eqnarray}
We have already known above that $\xi$ converges to $\bar{\xi}$ in probability, uniformly on bounded time intervals.
Our goal in this section is to show that $u$ converges to $\bar{u}$ in some probabilistic sense.

\bigskip

\begin{theorem}(Effective dynamical reduction I) \label{effective1}\\
Under the above assumptions on the coefficients, the system \eqref{effective} is an effective description of the  original system \eqref{coupled}.
That is,     for any $T>0$ and $\delta>0$,
\begin{eqnarray*}
\mathbb{P}\Big\{\sup\limits_{t\in[0,T]}\|u-\bar
u\|^2>\delta\Big\}\rightarrow 0, \quad  \quad as \quad
\varepsilon\rightarrow 0.
\end{eqnarray*}
This says that $u$ converges to  $\bar{u}$ in probability, uniformly on any finite time intervals.
\end{theorem}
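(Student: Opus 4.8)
The plan is to reduce the claim to the already-established convergence $\xi \to \bar\xi$ by a pathwise energy estimate for the difference of the two heat-type equations. Set $w := u - \bar u$. Subtracting the $\bar u$-equation in \eqref{effective} from the $u$-equation in \eqref{coupled}, $w$ solves, for each fixed sample path $\omega$,
\[
w_t = w_{xx} + \big(f(u,\xi) - f(\bar u, \bar\xi)\big), \qquad w(x,0)=0, \qquad w(0,t)=w(1,t)=0 .
\]
The crucial structural point is that neither $u$ nor $\bar u$ is driven by noise directly; the randomness enters only through the paths $\xi(\cdot)$ and $\bar\xi(\cdot)$. Hence, for almost every $\omega$, this is a \emph{deterministic} parabolic problem and I may work pathwise, with no It\^o terms to control.

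First I would derive the energy identity by multiplying by $w$ and integrating over $[0,1]$:
\[
\tfrac12 \ddt \|w\|^2 = -\|w_x\|^2 + \int_0^1 \big(f(u,\xi)-f(\bar u,\bar\xi)\big)\, w \, dx ,
\]
where the boundary terms vanish by the Dirichlet conditions and $\|\cdot\|$ is the $L^2(0,1)$ norm. Using the global Lipschitz bound $|f(u,\xi)-f(\bar u,\bar\xi)| \le K^f(|w| + |\xi-\bar\xi|)$, followed by $\int_0^1|w|\,dx \le \|w\|$ and Young's inequality, the nonlinear term is dominated by $C\|w\|^2 + C|\xi-\bar\xi|^2$ for a constant $C$ depending only on $K^f$. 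Discarding the favorable $-\|w_x\|^2$ term then yields $\ddt\|w\|^2 \le 2C\|w\|^2 + 2C|\xi-\bar\xi|^2$, and Gronwall's inequality with $w(0)=0$ gives the pathwise bound
\[
\sup_{t\in[0,T]}\|w(t)\|^2 \;\le\; C_T \sup_{t\in[0,T]} |\xi(t)-\bar\xi(t)|^2 , \qquad C_T := 2CT\,e^{2CT},
\]
whose constant is independent of $\varepsilon$.

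Finally I would transfer this to the desired probabilistic statement. For any $\delta>0$, the pathwise bound gives the inclusion of events
\[
\Big\{ \sup_{t\in[0,T]}\|u-\bar u\|^2 > \delta \Big\} \subseteq \Big\{ \sup_{t\in[0,T]} |\xi-\bar\xi| > \sqrt{\delta/C_T} \Big\},
\]
so that $\mathbb{P}\{\sup_t\|u-\bar u\|^2>\delta\}\le \mathbb{P}\{\sup_t|\xi-\bar\xi|>\sqrt{\delta/C_T}\}$, and the right-hand side tends to $0$ as $\varepsilon\to 0$ by the averaging result $\lim_{\varepsilon\to0}\mathbb{P}\{\sup_{t}|\xi-\bar\xi|>\delta'\}=0$ quoted from \cite{FW}. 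I expect the main obstacle to be organizational rather than analytic: one must use the \emph{uniform-in-time} ($\sup$) control of $\xi-\bar\xi$ here, since the weaker mean-square convergence $\sup_t\mathbb{E}|\xi-\bar\xi|^2\to0$ would only deliver convergence of $\mathbb{E}\|u-\bar u\|^2$ at each fixed $t$, not the supremum inside the probability. A secondary technical point is justifying the energy identity at the regularity level of the mild solutions, which is routine under the stated Lipschitz and boundedness hypotheses.
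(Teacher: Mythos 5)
Your proposal is correct and takes essentially the same route as the paper: an energy estimate for $U=u-\bar u$ (multiply by $U$, use Lipschitz/Young, drop the $-\|U_x\|^2$ term), Gronwall to get a pathwise bound by $\sup_{t\le T}|\xi-\bar\xi|^2$, and then an event inclusion combined with the Freidlin--Wentzell convergence in probability of $\xi$ to $\bar\xi$. If anything, your explicit observation that the argument is purely pathwise (no It\^o terms, so no expectation is needed) is a cleaner framing than the paper's own write-up, which inserts a spurious ``taking expectation'' before carrying out exactly this pathwise reasoning.
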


\begin{proof}
Recall the Gronwall's inequality in differential
form: Let $z:[0, T]\rightarrow \mathbb{R}$ satisfy the differential
inequality
\begin{eqnarray*}
\frac{d z}{dt} \leq g(t)z+h(t).
\end{eqnarray*}
Then
\begin{eqnarray*}
z(t)\leq z(0) \exp \big(\int_0^t g(r)\,dr\big)+\int_0^t \exp
\big(\int_s^t g(r)\,dr\big)h(s)\,ds.
\end{eqnarray*}

Denoting $U=u-\bar u$,   then
\begin{eqnarray}
U_t &=& U_{xx} + f(u,\xi)-f(\bar u,\bar\xi),
\end{eqnarray}
Multiply each side of the equation above by $2U$ and taking integral,
by Young's inequality and the global Lipschitz condition on $f$, we get
\begin{eqnarray*}
\frac{d}{dt}\|U\|^2 &=& -2\|U_x\|^2 + \|U\|^2  + K^f \|U\|^2 + K^f
|\xi-\bar\xi|^2\\
{}&\leq& (1+K^f)\|U\|^2 + K^f |\xi-\bar\xi|^2.
\end{eqnarray*}
Taking expectation and by the Gronwall's inequality, we obtain
\begin{eqnarray*}
\|U\|^2&\leq& \int_0^t e^{-(1+K^f)(s-t)}K^f |\xi-\bar\xi|^2\,ds\\
{}&\leq& e^{(1+K^f)T}K^f\int_0^T |\xi-\bar\xi|^2\,ds.
\end{eqnarray*}
Thus,
\begin{eqnarray*}
\mathbb{P}\Big\{\sup\limits_{t\in[0,T]}\|U\|^2>\delta\Big\}
&\leq&\mathbb{P}\Big\{\sup\limits_{t\in[0,T]}e^{(1+K^f)T}K^f\int_0^T
|\xi-\bar\xi|^2\,ds>\delta\Big\}\\
{}&\leq&\mathbb{P}\Big\{\int_0^T\sup\limits_{s\in[0,T]}
|\xi-\bar\xi|^2\,ds=T\sup\limits_{t\in[0,T]}|\xi-\bar\xi|^2>\delta/(e^{(1+K^f)T}K^f)\Big\}\\
{}&=&\mathbb{P}\Big\{\sup\limits_{t\in[0,T]}|\xi-\bar\xi|^2>\delta/(Te^{(1+K^f)T}K^f)\Big\}.
\end{eqnarray*}
By the result $\lim\limits_{\varepsilon\rightarrow
0}\mathbb{P}\big\{\sup\limits_{t\in[0,T]}|\xi-\bar\xi|>\delta\big\}=0$
  in \cite{FW}, we finally have
$\mathbb{P}\Big\{\sup\limits_{t\in[0,T]}\|U\|^2>\delta\Big\}\rightarrow
0$ as $\varepsilon\rightarrow 0$.
This completes the proof.
\end{proof}

\section{A more complex stochastic microscopic-macroscopic model}
\label{model2}

In this section  we  consider teh following more complicated slow-fast microscopic-macroscopic stochastic system
\begin{eqnarray}\label{macro-micro}
\begin{cases}
u_t = u_{xx} + f(u,v, \xi) + \sigma_1(u)\dot W_t^1, \quad u(x, 0)=u_0(x),  \\
v_t=\frac{1}{\varepsilon}(v_{xx}+g(u,v,\xi))+\frac{1}{\sqrt{\varepsilon}}
\sigma_2(u,v)\dot W_t^2, \quad v(x, 0)=v_0(x),\\
\dot{\xi} = b(\xi, \eta) + \sigma_3(\xi) \dot W_t^3, \quad \xi(0)=x_0,   \\
\dot{\eta} = \eps^{-1} B(\xi, \eta) + \eps^{-\frac12} \sigma_4(\xi,
\eta)\dot W_t^3, \quad \eta(0)=y_0,\\
u(0, t)= u(1, t)=0,\quad v(0, t)=v(1, t)=0,
\end{cases}
\end{eqnarray}
for $t\in[0, T]$, $x\in [0, 1]$, where $\eps$ is a small positive parameter, and
$\{W_t^i\}_{t\geq 0}, i=1, 2, 3$ are independent scalar
Brownian motions. For the coefficients we have the following assumptions:\\
\textbf{H1}: The drift coefficients $f(u,v, \xi)$
:$\mathbb{R}\times\mathbb{R}\times\mathbb{R}\rightarrow \mathbb{R}$,
 diffusion coefficients $\sigma_1(u)$
:$\mathbb{R}\rightarrow \mathbb{R}$ are Lipschitz continuous with
respect to all three variables and hence also linear growth, i.e.
there exist constant $K_f$, $K_{\sigma_1}$ such that for any $u_1$,
$u$, $v_1$, $v\in H$ and $\xi_1$, $\xi\in\mathbb{R}$,
\begin{eqnarray}
\nonumber|f(u_1,v_1, \xi_1)-f(u, v, \xi)|^2&\leq&
K_f(|u_1-u|^2+|v_1-v|^2+|\xi_1-\xi|^2),\\\nonumber |f(u,
v,\xi)|^2&\leq&K_f(1+|u|^2+|v|^2+|\xi|^2),\\\nonumber
|\sigma_1(u_1)-\sigma_1(u)|^2&\leq&K_{\sigma_1}|u_1-u|^2,\\
|\sigma_1(u)|^2&\leq&K_{\sigma_1}(1+|u|^2).
\end{eqnarray}
In addition, $f$ is bounded, i.e. exists $C_f$, such that
\begin{eqnarray}
|f(u, v, \xi)|\leq C_f.
\end{eqnarray}
\textbf{H2}: There exist constant $K_g$, $K_{\sigma_2}$, such that for any
$u_1$, $u$, $v_1$, $v$, $\xi_1$, $\xi$,
\begin{eqnarray}
\nonumber|g(u_1,v_1, \xi_1)-g(u, v, \xi)|^2&\leq&
K_g(|u_1-u|^2+|v_1-v|^2+|\xi_1-\xi|^2),\\\nonumber |g(u, v,
\xi)|^2&\leq&K_g(1+|u|^2+|v|^2+|\xi|^2),\\\nonumber |\sigma_2(u_1,
v_1)-\sigma_2(u,
v)|^2&\leq&K_{\sigma_2}(|u_1-u|^2+|v_1-v|^2),\\
|\sigma_2(u)|^2&\leq&K_{\sigma_2}(1+|u|^2).
\end{eqnarray}
Moreover, there exist  constants $\alpha> 0$ and $C_{\sigma_2}$,
such that
\begin{eqnarray}
\nonumber v\cdot g(u, v, \xi)&\leq& \alpha|v|^2,\\ \sigma_2(u,
v)&\leq&C_{\sigma_2}.
\end{eqnarray}
\textbf{H3}: There exist $K_b$, $C_b$, $K_{\sigma_3}$, $C_{\sigma_3}$ such
that for any $\xi$, $\xi_1$, $\eta$, $\eta_1$,
\begin{eqnarray}
\nonumber|b(\xi, \eta)-b(\xi_1,
\eta_1)|^2&\leq&K_b(|\xi-\xi_1|^2+|\eta-\eta_1|^2),\\\nonumber
|b(\xi, \eta)|^2&\leq&K_b(1+|\xi|^2+|\eta|^2),\\\nonumber
|\sigma_3(\xi)-\sigma_3(\xi_1)|^2&\leq&K_{\sigma_3}|\xi-\xi_1|^2,\\\nonumber
|\sigma_3(\xi)|^2&\leq&K_{\sigma_3}(1+|\xi|^2),\\\nonumber |b(\xi,
\eta)|&\leq&C_b,\\ |\sigma_3(\xi)|&\leq&C_{\sigma_3}.
\end{eqnarray}
Furthermore, there exists a constant $\beta>0$, such that
\begin{eqnarray}
\xi\cdot b(\xi, \eta)\leq \beta(1+|\xi|^2).
\end{eqnarray}
\textbf{H4}: There exist $K_B$, $C_B$, $K_{\sigma_4}$, $C_{\sigma_4}$ such
that for any $\xi$, $\xi_1$, $\eta$, $\eta_1$,
\begin{eqnarray}
\nonumber|B(\xi, \eta)-B(\xi_1,
\eta_1)|^2&\leq&K_B(|\xi-\xi_1|^2+|\eta-\eta_1|^2),\\\nonumber
|B(\xi, \eta)|^2&\leq&K_B(1+|\xi|^2+|\eta|^2),\\\nonumber
|\sigma_4(\xi, \eta)-\sigma_4(\xi_1,
\eta_1)|^2&\leq&K_{\sigma_4}(|\xi-\xi_1|^2+|\eta-\eta_1|^2),\\\nonumber
|\sigma_4(\xi,
\eta)|^2&\leq&K_{\sigma_4}(1+|\xi|^2+|\eta|^2),\\\nonumber |B(\xi,
\eta)|&\leq&C_B,\\ |\sigma_4(\xi, \eta)|&\leq&C_{\sigma_4}.
\end{eqnarray}
\textbf{H5}: $2\lambda_1+2\alpha-K_{\sigma_2}>0$, where $\lambda_1$ is the
first
eigenvalue of the operator $-\Delta$.\\

Let $H$ be the Hilbert space $L^2(D)$, equipped with inner products
$(\cdot , \cdot)_H$, and norm $\|\cdot\|=(\cdot,
\cdot)_H^{\frac{1}{2}}$. Define the operator $A=\Delta$ with zero
Dirichlet boundary condition. Let $\{e_k(x)\}_{k\geq 1}$ be the
complete  orthogonal system of eigenfunctions in $H$ such that, for
$k=1,2\cdots$,
\begin{equation}\label{eigenfunction}
-\Delta e_k=\lambda_ke_k,\;\;e_k|_{\partial D}=0 ,
\end{equation}
with $0<\lambda_1\leq\lambda_2\leq\cdots\lambda_k\leq\cdots$. It is
well known that the semigroup $\{G_t\}_{t\geq0}$ generated by
$\Delta$ can be defined by,
\begin{equation*}
(G_th)(\varsigma)=\int_DG(\varsigma,\zeta,t)h(\zeta)d\zeta ,
\end{equation*}
for any $h(\varsigma)\in H$, where
$G(\varsigma,\zeta,t)=\sum\limits_{k=1}^\infty
e^{-\alpha_kt}e_k(\varsigma)e_k(\zeta).$ It is clear that
$\|G_th\|\leq\|h\|$, thus $\{G_t\}_{t\geq0}$ is a contraction semigroup. Let $V$
be the Sobolev space $H_0^1$ of order $1$ with Dirichlet boundary
conditions, which is densely and continuously injected in the
Hilbert space $H$. $V$, $H$ and $V^\star$ satisfies a Gelfand triple
$$V\subset H\subset V^\star,$$
and
$$\Delta: V \rightarrow V^\star.$$
With the Poincare inequality, we have
\begin{eqnarray}\label{Poincare}
\langle\Delta v, v\rangle=-\|\nabla v\|^2\leq-\lambda\|v\|^2,
\end{eqnarray}
where $\langle\cdot,\cdot\rangle$ denotes the dual pairs of
$(V^\star, V).$ \\
Under the assumptions, the macroscopic fast equation has a unique
stationary solution, with distribution $\mu^u$ independent of
$\eps$, and the average is
\begin{eqnarray}
\bar f(u, \xi)&=& \int_H f(u, v, \xi)\,\mu^u(dv), \quad u\in H,
\quad \xi\in \mathbb{R}.
\end{eqnarray}
Then we deal with the following macroscopic effective system
\begin{eqnarray}
\bar u_t &=& \bar u_{xx} + \bar f(\bar u,\bar \xi) + \sigma_1(\bar
u)\dot W_t^1, \quad \bar u(x, 0)=u_0(x).
\end{eqnarray}
Moreover, an averaged microscopic effective model for $\xi$ is defined   as in the last section.

 Now we consider the following effective system for the original microscopic-macroscopic system \eqref{macro-micro}:
\begin{eqnarray}\label{effective-new}
\begin{cases}
  \bar u_t  = \bar u_{xx} + \bar f(\bar u,\bar \xi) + \sigma_1(\bar
u)\dot W_t^1, \quad \bar u(x, 0)=u_0(x),  \\
 \dot{\bar{\xi}}  = \bar{b}(\bar\xi) + \sigma_3(\bar{\xi}) \dot{W}_t^3, \quad
 \bar{\xi}(0)=x_0.
\end{cases}
\end{eqnarray}
We have already known above that $\xi$ converges to $\bar{\xi}$ in probability, uniformly on bounded time intervals.
Our goal in this section is to show that $u$ converges to $\bar{u}$ in some probabilistic sense.

\bigskip

% We want to get some
%convergence property of $\bar u$ and $u$.\\
%Gronwall's inequality in integral form: Let $f(t)$, $g(t)$, and
%$h(t)$ be non-negative on [0, T], and for all $0\leq t \leq T$
%$$f(t)\leq g(t)+\int_0^t h(s)f(s)\,ds,$$ then for  $0\leq t \leq T$
%$$f(t)\leq g(t)+\int_0^t h(s)g(s)exp  \big(\int_s^t h(u)\,du\big)\,ds.$$

%For the microscopic system, we know that it has a unique solution,
%we see the solution as frozen parameter and then introduce some
%definitions of solutions of
The well-posedness for both     systems
\eqref{macro-micro} and  \eqref{effective-new} is verified as in \cite{DaPrato}.

\begin{definition}
(Mild solution). For fixed $\xi$, an $H\times H-valued$ predictable
process $(u(t), v(t))$ is called a \emph{mild solution} of the first
two components of Eq. \eqref{macro-micro} if for any $t\in [0, T],$
\begin{equation}
\begin{cases}
u(t)=G_tu_0+\int_0^tG_{t-s}f(u(s),
v(s),\xi)ds+\int_0^tG_{t-s}\sigma_1(u(s))dW^1_s,\\
v(t)=G_t^\eps v_0+\frac{1}{\eps}\int_0^tG_{t-s}^\eps g(v(s), v(s),
\xi)ds+\frac{1}{\sqrt{\eps}}\int_0^tG_{t-s}^\eps\sigma_2(u(s),
v(s))dW^2_s,
\end{cases}
\end{equation}
where $\{G_{t}^\epsilon\}_{t\geq0}$ denote the semigroup generated
by differential operator $\frac{\Delta}{\epsilon}.$
\end{definition}
%It is known that under condition, the Eq. \eqref{Fast-Slow-Eqn} has
%a unique mild solution which is a continuous process in $H\times H.$

\begin{definition}
(Strong solution).  For fixed $\xi$, a $V\times V-valued$
predictable process $(u(t), v(t))$ is called a strong solution of
the first two components  of Eq. \eqref{macro-micro} if, for any
$\varphi\in V$,
\begin{equation}
\begin{cases}
\big(u(t), \varphi\big)_H=\big(u_0, \varphi\big)_H+\int_0^t\langle
\Delta u(s)^\epsilon, \varphi\rangle ds+\int_0^t\big(f(u(s),
v(s), \xi), \varphi\big)_Hds\\
\qquad\qquad\quad\,\,+\int_0^t\big(\sigma_1(u(s)),
\varphi\big)_HdW^1_s,\\
\big(v(t), \varphi\big)_H=\big(v_0,
\varphi\big)_H+\frac{1}{\eps}\int_0^t\langle \Delta v(s),
\varphi\rangle ds+\frac{1}{\epsilon}\int_0^t\big(g(u(s),
v(s), \xi), \varphi\big)_Hds\\
\qquad\qquad\quad\,\,+\frac{1}{\sqrt{\epsilon}}\int_0^t\big(\sigma_2(u(s),
v(s)), \varphi\big)_HdW^2_s,
\end{cases}
\end{equation}
hold for any $t\in [0, T] \,\,\, a.s.$.
\end{definition}

%%%%%We assume the following condtions on the data:

Under the assumptions we listed, for any fixed $u_0\in H$ and any
$v_0\in H$, the  first two equations of the Eq. \eqref{macro-micro}
has a unique strong solution (also a mild solution). Moreover, the
following energy identities hold ( \cite{Krylov} or \cite{Chow}):
\begin{eqnarray}
\nonumber\|u(t)\|^2&=&\|u_0\|^2+2\int_0^t\langle \Delta u(s),
u(s)\rangle ds+2\int_0^t\big(f(u(s),
v(s), \xi), u(s)\big)_Hds\\
\label{Energy-slow}{}&&+2\int_0^t\big(\sigma_1(u(s),
u(s)\big)_HdW^1_s+\int_0^t\|\sigma_1(u(s))\|^2ds,
\end{eqnarray}
and
\begin{eqnarray}
\nonumber\|v(t)\|^2&=&\|v_0\|^2+\frac{2}{\epsilon}\int_0^t\langle
\Delta v(s), v(s)\rangle ds+\frac{2}{\epsilon}\int_0^t\big(g(u(s),
v(s), \xi), v(s)\big)_Hds\\
\nonumber{}&&+\frac{2}{\sqrt{\epsilon}}\int_0^t\big(\sigma_2(u(s),
v(s)), v(s)\big)_HdW^2_s+\frac{1}{\epsilon}\int_0^t\|\sigma_2(u(s),
v(s))\|^2ds.\\
{}\label{Energy-fast}
\end{eqnarray}

% We introduce a random process $v^{u_0, v_0}$, $u_0\in\mathbb{R}$,
% $v_0\in\mathbb{R}$, for fixed $u_0\in H$, which is defined by the
% differential equation
% \begin{eqnarray}\label{Fast-eqn}
% \begin{cases}
% dv^{u_0, v_0}(t)=[\Delta v^{u_0, v_0}+g(u_0, v^{u_0, v_0},
% \xi)]dt+\sigma_2(u_0, v^{u_0, v_0}(t))dW^2_t,\\
% v^{u_0, v_0}(x, 0)=v_0(x),\quad v^{u_0, v_0}(0, t)=v^{u_0, v_0}(1,
% t)=0,
% \end{cases}
Similar to the case in section 2, for fixed $u_0\in H$,
$\xi\in\mathbb{R}$ we introduce a fast motion with frozen slow
component
\begin{eqnarray}\label{fast}
\begin{cases}
dv(t)=[v_{xx}(t)+g(u_0, v(t), \xi)]dt+\sigma_2(u_0,
v(t))dW^2_t,\\
v(x, 0)=v_0(x),\quad x\in[0, 1],\\
v(0, t)=v(1, t)=0, \quad t\in[0, T].
\end{cases}
\end{eqnarray}
 Under the assumptions, for any fixed $u_0\in H$ and any $v_0\in
H$, the  Eq. \eqref{fast} has a unique strong solution (also a mild
solution), which will be denoted by $v^{u_0, v_0}(t)$. By energy
equality similar to \eqref{Energy-fast}, one can get
\begin{eqnarray*}
\mathbb{E}\|v^{u_0, v_0}(t)\|^2\leq
\|v_0\|^2-2(\lambda_1+\alpha)\int_0^t\mathbb{E}\|v^{u_0,
v_0}(s)\|^2ds+Ct.
\end{eqnarray*}
By the Gronwall's inequality again we have
\begin{equation*}%\label{Unscale-Fast-eqn}
\mathbb{E}\|v^{u_0, v_0}(t)\|^2\leq
C\bigg(1+\|v_0\|^2e^{-2(\lambda_1+\alpha)t}\bigg).
\end{equation*}
Let $v^{u_0, v'_0}(t)$ be the solution of Eq. \eqref{fast} with
initial value $v(0)=v'_0$. With the aid of energy equality similar
to \eqref{Energy-fast}, we get that
\begin{eqnarray*}
\mathbb{E}\|v^{u_0, v_0}(t)-v^{u_0,
v'_0}(t)\|^2&=&\|v_0-v'_0\|^2+2\mathbb{E}\int_0^t\langle A(v^{u_0,
v_0}(s)-v^{u_0, v'_0}(s)), v^{u_0, v_0}(s)-v^{u_0, v'_0}(s)\rangle \,ds\\
{}&&+2\mathbb{E}\int_0^t\big(g(u_0, v^{u_0, v_0}(s), \xi)-g(u_0,
v^{u_0, v'_0}(s), \xi),
v^{u_0, v_0}(s)-v^{u_0, v'_0}(s)\big)_Hds\\
{}&&+\mathbb{E}\int_0^t\|\sigma_2(u_0, v^{u_0,
v_0}(s))-\sigma_2(u_0, v^{u_0, v'_0}(s))\|^2ds\\
&\leq&\|v_0-v'_0\|^2-(2\alpha_1+2\beta-C_{\sigma_2})\int_0^t\mathbb{E}\|v^{u_0,
v_0}(s)-v^{u_0, v'_0}(s)\|^2\,ds.
\end{eqnarray*}
Hence
\begin{equation}\label{Inital-fast-eqn}
\mathbb{E}\|v^{u_0, v_0}(t)-v^{u_0, v'_0}(t)\|^2\leq
\|v_0-v'_0\|^2e^{-\kappa t},
\end{equation}
where $\kappa=2\alpha_1+2\beta-C_{\sigma_2}>0.$\\

For any $u\in H$ denote by $P_t^u$ the Markov semigroup associated
to Eq. \eqref{macro-micro} defined by
$$P_t^uf(z)=\mathbb{E}f(V_t^{u, z}), \quad t\geq0,\quad z\in H,$$
for any $f\in\mathcal {B}_b(H)$ the space of bounded functions on
$H$. We also recall a probability $\mu^u$ on $H$ is called that a
invariant measure for $(P_t^u)_{t\geq 0}$ if
\begin{equation*}
\int_HP_t^uf d\mu^u=\int_Hf d\mu^u, \quad t\geq 0,
\end{equation*}
for any bounded function $f \in \mathcal {B}_b(H).$ As in
\cite{Cerrai1}, it is possible to show there exists an unique
invariant measure $\mu^u$ for the semigroup $P_t^u$ which satisfies
\begin{equation}
\int_H\|z\|\mu^u(dz)\leq C(1+\|u\|).
\end{equation}
Furthermore, according to Lipschitz assumption on $f$ and
\eqref{Inital-fast-eqn} we have
\begin{eqnarray}
&&\!\!\!\!\!\!\!\!\!\!\nonumber\Big\|\mathbb{E}f(u, V_t^{u, v},
\xi)-\int_Hf(u,
z, \xi)\mu^u(dz)\Big\| \\
\nonumber&=&\Big\|\int_H[\mathbb{E}f(u, V_t^{u,
v}, \xi)-\mathbb{E}f(u, V_t^{u, z}, \xi)]\mu^u(dz)\Big\|\\
\nonumber&\leq&C\int_H\mathbb{E}\|V_t^{u, v}-V_t^{u,
z}\|\mu^u(dz)\\
\nonumber&\leq&Ce^{-\frac{\kappa}{2}t}\int_H\|v-z\|\mu^u(dz)\\
\nonumber&\leq&Ce^{-\frac{\kappa}{2}t}\Big[\|v\|+\int_H\|z\|\mu^u(dz)\Big]\\
\label{Averaging-property}&\leq&Ce^{-\frac{\kappa}{2}t}\Big[1+\|u\|+\|v\|\Big].
%%%%%%%%%%%%%%%%%%%%%%%%%%%%%%%%%%%%%%%%%%%%%%%%%%%%%%%%%%
%%%%%%%%%%%%%%%%%%%%%%%%%%%%%%%%%%%%%%%%%%%%%%%%%%%%%%%%%%
%\nonumber&=&Ce^{-\frac{\eta}{2}t}\Big[\|y\|+\int_HP_s^x\|z\|\mu^x(dz)\Big]\\
%\nonumber&=&Ce^{-\frac{\eta}{2}t}\Big[\|y\|+\int_H\|Y_s^{x,
%z}\|\mu^x(dz)\Big]\\
%\nonumber&\leq&Ce^{-\frac{\eta}{2}t}\Big[\|y\|+(1+e^{\frac{-\eta}{2}s}\int_H\|z\|\mu^x(dz))\Big].
%%%%%%%%%%%%%%%%%%%%%%%%%%%%%%%%%%%%%%%%%%%%%%%%%%%%%%%%%%
%%%%%%%%%%%%%%%%%%%%%%%%%%%%%%%%%%%%%%%%%%%%%%%%%%%%%%%%%%
\end{eqnarray}
% Moreover, we note that for any $\gamma\in (0, 1)$, there exists a
% $C_\gamma>0$ such that
% \begin{equation}\label{exponent-inequa}
% |e^{-a}-e^{-b}|\leq C_\gamma |a-b|^\gamma,
% \end{equation}
% for $a, b\in \mathbb{R}$.

The following arguments follow \cite{Fu}.
First we have some mean square uniform estimates on $u$, $v$, and
$\xi$.
\begin{lemma}There exists a
constant $C_T> 0$ such that
\begin{equation}\label{Slow-estimate}
\sup\limits_{0\leq t\leq T}\mathbb{E}| \xi|^2\leq C_T.
\end{equation}
\begin{proof}
For the slow equation $\xi$ of the microscopic system, multiplying each side
with $2\xi$, we get
$$\frac{d}{dt}|\xi|^2=2\xi\cdot b(\xi,
\eta)+2\xi\cdot\sigma_3(\xi)\dot W_t^3.$$
After integrating and taking expectation on both sides, we get
\begin{eqnarray*}
\mathbb{E}|\xi|^2&=&x^2+2\mathbb{E}\int_0^t \xi\cdot b(\xi,
\eta)\,ds+2\mathbb{E}\int_0^t \xi\cdot\sigma_3(\xi)dW_s^3\\
{}&\leq& x^2+2\beta t+2\beta\int_0^t \mathbb{E}|\xi|^2\,ds.
\end{eqnarray*}
 Thanks to the
Gronwall's inequality, we finally have
\begin{eqnarray*}
\mathbb{E}|\xi|^2&\leq& x^2+2\beta t+2\beta\int_0^t(x^2+2\beta s)e^{2\beta(t-s)}\,ds\\
{}&=&(1+x^2)e^{2\beta t}-1\\
{}&\leq& C_T.
\end{eqnarray*}
\end{proof}
\end{lemma}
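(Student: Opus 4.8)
The plan is to derive a closed Gronwall-type inequality for the second moment $m(t) := \mathbb{E}|\xi(t)|^2$ directly from the slow SDE, using only the one-sided growth bound in \textbf{H3}. First I would apply It\^o's formula to the scalar process $|\xi(t)|^2$ along the equation $\dot\xi = b(\xi,\eta) + \sigma_3(\xi)\dot W_t^3$, which yields
$$|\xi(t)|^2 = x_0^2 + 2\int_0^t \xi(s)\cdot b(\xi(s),\eta(s))\,ds + \int_0^t |\sigma_3(\xi(s))|^2\,ds + 2\int_0^t \xi(s)\,\sigma_3(\xi(s))\,dW_s^3.$$
Note the It\^o correction term $\int_0^t |\sigma_3|^2\,ds$, absent from a naive chain-rule computation but harmless here since $\sigma_3$ is bounded by $C_{\sigma_3}$.

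Next I would take expectations. The stochastic integral is a genuine martingale: its integrand $\xi\,\sigma_3(\xi)$ satisfies $\mathbb{E}\int_0^T |\xi\,\sigma_3(\xi)|^2\,ds < \infty$ once $\xi$ is known to have finite second moments, so its expectation vanishes. To make this rigorous without circularity, I would introduce the stopping times $\tau_N = \inf\{t : |\xi(t)| \ge N\}$, run the argument up to $t\wedge\tau_N$ (where the martingale term clearly has zero mean), and then pass $N\to\infty$ via Fatou's lemma at the end. Applying the one-sided bound $\xi\cdot b(\xi,\eta)\le\beta(1+|\xi|^2)$ from \textbf{H3} together with $|\sigma_3|\le C_{\sigma_3}$ gives
$$m(t) \le x_0^2 + (2\beta + C_{\sigma_3}^2)\,t + 2\beta\int_0^t m(s)\,ds.$$

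Finally I would apply Gronwall's inequality (in the integral form recalled in the proof of Theorem \ref{effective1}) to conclude a bound of the shape $m(t)\le (1+x_0^2)e^{2\beta t}$, which is dominated by a constant $C_T$ uniformly for $t\in[0,T]$. I would emphasize that because the coefficients $b$ and $\sigma_3$ governing the slow variable carry no $\varepsilon$-dependence, the resulting $C_T$ depends only on $\beta$, $C_{\sigma_3}$, $x_0$ and $T$, and hence is independent of $\varepsilon$, which is the feature that matters for the uniform estimates invoked later.

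The main obstacle is not the algebra but the justification that the It\^o integral has zero mean: the coercivity bound in \textbf{H3} controls $\xi\cdot b$ only from above, so a priori one does not have an independent moment bound available to invoke dominated convergence. The localization-by-stopping-times argument circumvents this cleanly, and the boundedness of $\sigma_3$ ensures that the correction term and the martingale's quadratic variation are controlled uniformly in $N$, so the limit $N\to\infty$ presents no real difficulty.
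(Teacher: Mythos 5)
Your proposal is correct and follows essentially the same route as the paper: an It\^o/energy computation for $|\xi|^2$, taking expectations, the one-sided bound $\xi\cdot b(\xi,\eta)\le\beta(1+|\xi|^2)$ from \textbf{H3}, and Gronwall's inequality, yielding a bound of the form $(1+x_0^2)e^{2\beta t}$ plus lower-order terms. In fact you are more careful than the paper on two points: the paper's displayed identity uses the naive chain rule and omits the It\^o correction $\int_0^t|\sigma_3(\xi)|^2\,ds$ (harmless, as you note, since $|\sigma_3|\le C_{\sigma_3}$, so it only enlarges $C_T$), and the paper drops the expectation of the stochastic integral without justification, whereas your localization by the stopping times $\tau_N$ together with Fatou's lemma supplies the missing argument.
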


\begin{lemma}  There exists a
constant $C> 0$ such that
\begin{equation}\label{Fast-estimate}
\sup\limits_{0\leq t\leq T}\mathbb{E}\| v\|^2\leq C.
\end{equation}
\begin{proof}
Due to energy identity \eqref{Energy-fast}, coercivity
\eqref{Poincare}, the assumption H2 and the Gronwall's inequality, we
obtain the desired result.\\
% \begin{eqnarray*}
% \mathbb{E}\|v^\epsilon\|^2&=&\|v_0\|^2+\frac{2}{\eps}\mathbb{E}\int_0^t\
% \langle\Delta
% v,v\rangle\,ds+\frac{2}{\epsilon}\mathbb{E}\int_0^t\Big(g(u, v,
% \xi), v\Big)_Hds+\frac{1}{\eps}\mathbb{E}\int_0^t\|\sigma_2(u,
% v)\|^2ds\\
% {}&\leq&\|v_0\|^2-\frac{2\lambda_1+2\alpha}{\eps}\int_0^t\mathbb{E}\|v\|^2ds
% +\frac{C}{\eps}t.
% \end{eqnarray*}
% Then by Gronwall's inequality , it follows
% \begin{eqnarray*}
% \mathbb{E}\|v\|^2&\leq&\|v_0\|^2e^{-\frac{2(\lambda_1+\alpha)}{\eps}}-\frac{C}{2(\lambda_1+\alpha)}[e^{-\frac{1}{\eps}2(\lambda_1+\alpha)t}-1]\\
% &\leq&\|v_0\|^2+\frac{C}{2(\lambda_1+\alpha)}.
% \end{eqnarray*}
\end{proof}
\end{lemma}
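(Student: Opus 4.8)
The plan is to run an It\^o energy estimate directly on the fast component, exploiting the fact that the $1/\eps$ prefactor multiplies the dissipative term \emph{and} the forcing term simultaneously, so that the coercivity of $\Delta$ together with the dissipativity in H2 and the spectral gap condition H5 produce a second-moment bound that is uniform not only in $t$ but also in $\eps$. First I would start from the energy identity \eqref{Energy-fast} for $\|v(t)\|^2$ and take expectation. The It\^o stochastic integral $\frac{2}{\sqrt{\eps}}\int_0^t(\sigma_2(u,v),v)_H\,dW^2_s$ is a genuine martingale of zero mean, which I would justify by the standard localization: introduce the stopping times $\tau_N=\inf\{t:\|v(t)\|>N\}$, carry out the computation up to $t\wedge\tau_N$ (where the integrand is bounded and the stochastic integral is a true martingale), and then let $N\to\infty$ using the monotone/Fatou convergence once the a priori bound is secured. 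This leaves
\[
\mathbb{E}\|v(t)\|^2=\|v_0\|^2+\frac{2}{\eps}\mathbb{E}\int_0^t\langle\Delta v,v\rangle\,ds+\frac{2}{\eps}\mathbb{E}\int_0^t(g(u,v,\xi),v)_H\,ds+\frac{1}{\eps}\mathbb{E}\int_0^t\|\sigma_2(u,v)\|^2\,ds.
\]

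Next I would insert the three bounds. By the Poincar\'e inequality \eqref{Poincare}, $\langle\Delta v,v\rangle\le-\lambda_1\|v\|^2$; by the one-sided (dissipativity) bound in H2, integrated over $D$ and used exactly as in the frozen-fast estimate preceding the lemma, $(g(u,v,\xi),v)_H\le-\alpha\|v\|^2$; and by the growth bound in H2, $\|\sigma_2(u,v)\|^2\le K_{\sigma_2}(1+\|v\|^2)$. Substituting and collecting the coefficient of $\mathbb{E}\|v\|^2$ gives
\[
\mathbb{E}\|v(t)\|^2\le\|v_0\|^2-\frac{2\lambda_1+2\alpha-K_{\sigma_2}}{\eps}\int_0^t\mathbb{E}\|v(s)\|^2\,ds+\frac{K_{\sigma_2}}{\eps}\,t.
\]
This is the point where H5 enters decisively: the constant $\theta:=2\lambda_1+2\alpha-K_{\sigma_2}$ is strictly positive, so the integral term is genuinely dissipative rather than a source. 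Differentiating in $t$ and applying the differential form of Gronwall's inequality quoted in Section~\ref{model} (with $g(t)\equiv-\theta/\eps$ and $h(t)\equiv K_{\sigma_2}/\eps$) yields
\[
\mathbb{E}\|v(t)\|^2\le\|v_0\|^2\,e^{-\theta t/\eps}+\frac{K_{\sigma_2}}{\theta}\Big(1-e^{-\theta t/\eps}\Big)\le\|v_0\|^2+\frac{K_{\sigma_2}}{2\lambda_1+2\alpha-K_{\sigma_2}}=:C,
\]
a constant independent of both $t\in[0,T]$ and $\eps$. Taking the supremum over $t\in[0,T]$ then gives \eqref{Fast-estimate}.

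The main obstacle I anticipate is not the algebra but the uniformity in $\eps$. A careless Gronwall applied to the $1/\eps$-scaled inequality would produce a constant behaving like $e^{C/\eps}$, which is worthless for the subsequent $\eps\to0$ averaging. The essential observation is that the $1/\eps$ multiplies the negative coefficient $-\theta$ as well as the forcing $K_{\sigma_2}$, so the two prefactors cancel in the \emph{stationary} part $K_{\sigma_2}/\theta$ of the Gronwall bound and survive only in the exponentially decaying initial term; this cancellation is precisely why H5 (positivity of $\theta$) is the natural hypothesis, and it explains why the dissipation from $\Delta$ and $g$ must be retained rather than discarded. The remaining technical care is in the martingale/localization step above, which is needed to legitimately drop the stochastic integral before the finite second moment is available.
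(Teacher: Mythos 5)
Your proof is correct and is precisely the argument the paper's one-line proof intends: take expectation in the energy identity \eqref{Energy-fast}, remove the martingale term (your localization step makes this rigorous), apply the coercivity \eqref{Poincare} together with the dissipativity of $g$ and the growth bound on $\sigma_2$ from H2, and close with the differential form of Gronwall so that the $1/\eps$ factors cancel, giving a bound uniform in both $t$ and $\eps$ exactly as H5 is designed to ensure. The only point worth noting is that you read H2 as $v\cdot g(u,v,\xi)\le-\alpha|v|^2$; this is the sign the paper itself uses in its frozen-fast estimate (where the decay rate $2(\lambda_1+\alpha)$ appears) and the only reading under which H5 is meaningful, so your interpretation matches the paper's intent rather than its literal (typographically flawed) statement.
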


\begin{lemma}There exists a
constant $C_T> 0$ such that
\begin{equation}\label{Slow-estimate2}
\sup\limits_{0\leq t\leq T}\mathbb{E}\| u(t)\|^2\leq C_T.
\end{equation}
\begin{proof}
Applying energy identity \eqref{Energy-slow},  with the aid of
\eqref{Poincare} and the above two lemmas , we get
\begin{eqnarray*}
\mathbb{E}\|u(t)\|^2&=&\|u_0\|^2+\mathbb{E}\int_0^t\langle \Delta
u(s), u(s)\rangle ds+\mathbb{E}\int_0^t\Big(f(u(s), v(s), \xi),
u(s)\Big)_Hds+\mathbb{E}\int_0^t\|\sigma_1(u(s)
)\|^2ds\\
{}&\leq&\|u_0\|^2+C\int_0^t\mathbb{E}\|u(s)\|^2ds
+C\int_0^t\mathbb{E}(1+\|u(s)\|^2+\|v(s)\|^2+\|\xi\|^2)ds\\
{}&\leq&\|u_0\|^2+C_T\int_0^t\mathbb{E}\|u(s)\|^2ds +Ct.
\end{eqnarray*}
The Gronwall's inequality  yields the desired estimation.\\
\end{proof}
\end{lemma}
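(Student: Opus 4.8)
The plan is to run a standard energy ($L^2$ a priori moment) estimate on the slow macroscopic component $u$, closing it with Gronwall's inequality after absorbing the contributions of the fast variables $v$ and $\xi$ by means of the two preceding lemmas. The natural starting point is the energy identity \eqref{Energy-slow} for $\|u(t)\|^2$, which I would take expectations of term by term.

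First I would handle the deterministic terms on the right-hand side. The dissipative contribution is controlled by the coercivity bound \eqref{Poincare}: since $\langle \Delta u, u\rangle = -\|\nabla u\|^2 \le -\lambda_1\|u\|^2 \le 0$, this term is non-positive and may simply be discarded. The drift term is estimated by Cauchy--Schwarz together with Young's inequality and the linear-growth part of assumption H1, writing $\|f(u,v,\xi)\|^2 \le K_f(1+\|u\|^2+\|v\|^2+|\xi|^2)$, where the scalar $\xi$ contributes $|\xi|^2$ because the spatial domain has unit length; this yields $2(f,u)_H \le \|u\|^2 + C(1+\|u\|^2+\|v\|^2+|\xi|^2)$. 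The It\^o correction $\int_0^t \|\sigma_1(u(s))\|^2\,ds$ is bounded the same way, using $\|\sigma_1(u)\|^2 \le K_{\sigma_1}(1+\|u\|^2)$.

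The delicate point, and the one I expect to be the main obstacle, is the stochastic integral $2\int_0^t (\sigma_1(u(s)),u(s))_H\,dW^1_s$. I would like to assert that it is a mean-zero martingale and drop it upon taking expectation, but this is legitimate only if $\mathbb{E}\int_0^t |(\sigma_1(u),u)_H|^2\,ds<\infty$, which presupposes exactly the bound I am trying to prove. The clean way around this circularity is a localization argument: introduce the stopping times $\tau_n = \inf\{t\ge 0 : \|u(t)\| \ge n\}$, carry out the estimate on the interval $[0,\,t\wedge\tau_n]$ where the integrand is bounded and the stochastic integral is a genuine martingale with vanishing expectation, derive a bound uniform in $n$, and then let $n\to\infty$ using Fatou's lemma to recover the estimate for the unstopped process.

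Finally, after taking expectations and invoking the two previous lemmas, which supply $\sup_{0\le t\le T}\mathbb{E}\|v\|^2 \le C$ and $\sup_{0\le t\le T}\mathbb{E}|\xi|^2 \le C_T$, the inhomogeneous terms are bounded by $Ct \le CT$, leaving the closed inequality $\mathbb{E}\|u(t)\|^2 \le \|u_0\|^2 + C\int_0^t \mathbb{E}\|u(s)\|^2\,ds + CT$. Gronwall's inequality then gives $\mathbb{E}\|u(t)\|^2 \le (\|u_0\|^2 + CT)e^{CT}$, and since this bound is independent of $t$, taking the supremum over $t\in[0,T]$ produces the desired constant $C_T$.
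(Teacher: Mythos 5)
Your proposal is correct and follows essentially the same route as the paper: take expectations in the energy identity \eqref{Energy-slow}, discard the dissipative term via \eqref{Poincare}, bound the drift and It\^o correction terms by linear growth together with the preceding lemmas on $\mathbb{E}\|v\|^2$ and $\mathbb{E}|\xi|^2$, and close with Gronwall. Your stopping-time localization justifying that the stochastic integral has zero expectation is a rigor refinement the paper silently omits, but it does not change the structure of the argument.
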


\begin{lemma}\label{lemma-Holder}
For any $h\in (0, 1)$ and $\gamma\in (0, \frac{1}{2})$, there exists a
constant $C_\gamma>0$ such that
\begin{equation}\label{Holder-continuity}
\mathbb{E}\|u(t+h)-u(t)\|^2\leq C_\gamma h^\gamma.
\end{equation}
\begin{proof}
In the mild sense
\begin{eqnarray}
\nonumber
u(t+h)-u(t)&=&[G_{t+h}u_0-G_tu_0]+\int_t^{t+h}G_{t+h-s}f(u(s), v(s),
\xi)\,ds\\\nonumber
{}&&+\int_t^{t+h}G_{t+h-s}\sigma_1(X_s^\epsilon)dW^1_s\\\nonumber
{}&&+\int_0^t[G_{t+h-s}f(u(s), v(s), \xi)-G_{t-s}f(u(s), v(s),
\xi)]\,ds\\\nonumber
{}&&+\int_0^t[G_{t+h-s}\sigma_1(u(s))-G_{t-s}\sigma(u(s))]dW^1_s\\
{}\label{I_1---I_5}&=:&\sum\limits_{i=1}^{5}I_i.
\end{eqnarray}
By the property of semigroup $G_t$ (see \cite{pazy}), we   have the
estimate of $I_1$,
\begin{eqnarray}%%%%% I_1 %%%%%
\|I_1\|^2% &=&\|G_t(G_hu_0-u_0)\|^2\\\nonumber
% &\leq&\|G_hu_0-u_0\|^2\\\nonumber
% &=&\|\int_0^hG_\tau\Delta u_0d\tau\|^2\\
\label{I-1}&\leq&h^2\|\Delta u_0\|^2.
\end{eqnarray}
By the H\"{o}lder inequality and the bounded property of $f$,
we   deduce that
\begin{eqnarray}\nonumber%%%%% I_2%%%%%
\mathbb{E}\|I_2\|^2&\leq& h\mathbb{E}\int_t^{t+h}\|G_{t+h-s}f(u(s),
v(s), \xi)\|^2ds\\\nonumber &\leq&Ch\int_t^{t+h}\mathbb{E}\|f(u(s),
v(s), \xi)\|^2\,ds\\
\label{I-2}&\leq&Ch^2.
\end{eqnarray}
Using the It\^{o} isometry and H\"{o}lder inequality, it yields
\begin{eqnarray}\nonumber%%%%% I_3 %%%%%
\mathbb{E}\|I_3\|^2&=&\mathbb{E}\int_t^{t+h}\|G_{t+h-s}\sigma_1(u(s))\|^2ds\\
\nonumber&\leq&C\int_t^{t+h}\mathbb{E}[1+\|u(s)\|^2]\,ds\\
\label{I-3}&\leq&C_T h.
\end{eqnarray}
Moreover
%\begin{eqnarray}%%%%% I_4%%%%%
%\nonumber I_4% =\sum\limits_{k=1}^\infty
% \int_0^t\left[\left(e^{-\alpha_k(t+h-s)}-e^{-\alpha_k(t-s)}\right)e_k\big(e_k,
% f(u(s), v(s), \xi)\big)_H\right]\,ds.
% \end{eqnarray}
% Hence, by H\"{o}lder inequality, bounded assumption on $f$ and
% inequality \eqref{exponent-inequa}, we have
\begin{eqnarray}\nonumber%%%%% I_4 %%%%%
\mathbb{E}\|I_4\|^2 % &\leq&
% C\mathbb{E}\sum\limits_{k=1}^\infty\int_0^t\left(e^{-\alpha_k(t+h-s)}-e^{-\alpha_k(t-s)}\right)^2ds\\
% \nonumber&=&C\sum\limits_{k=1}^\infty\frac{1}{\alpha_k}\left\{2(1-e^{-\alpha_kh})
% -(1-e^{-2\alpha_kh})-(e^{-\alpha_k(t+h)}-e^{\alpha_kt})^2\right\}\\
% \nonumber&\leq&C\sum\limits_{k=1}^\infty\frac{1}{\alpha_k}2(1-e^{-\alpha_kh})\\
% \nonumber&\leq&C_\gamma
% h^\gamma\sum\limits_{k=1}^\infty\frac{1}{\alpha_k^{1-\gamma}}\\
\label{I-4}&\leq&C_\gamma h^\gamma,
\end{eqnarray}
% with $C_\gamma$ dependent of $\gamma$ and where the asymptotic
% property \eqref{Asymptotic} is applied in the last second step. Now
% it remains to estimate $\mathbb{E}\|I_5\|^2$ and note that
% \begin{displaymath}\nonumber
% I_5=\sum\limits_{k=1}^\infty
% \left\{e_k\int_0^t\left[(e^{-\alpha_k(t+h-s)}-e^{-\alpha_k(t-s)})\big(\sigma_1(u(s)),
% e_k\big)_H\right]dW_1(s)\right\}
% \end{displaymath}
% Similarly, due to the It\^{o} isometry and H\"{o}lder inequality, we
% get
and
\begin{eqnarray}\nonumber%%%%% I_5 %%%%%
\mathbb{E}\|I_5\|^2 % &=&\mathbb{E}\sum\limits_{k=1}^\infty
% \int_0^t\left\{\left(e^{-\alpha_k(t+h-s)}-e^{-\alpha_k(t-s)}\right)^2\big(\sigma_1(u(s)),
% e_k\big)_H^2\right\}\,ds\\
% \nonumber &\leq&\sum\limits_{k=1}^\infty
% \int_0^t\left\{\left(e^{-\alpha_k(t+h-s)}-e^{-\alpha_k(t-s)}\right)^2\mathbb{E}\|\sigma_1(u(s))\|^2\right\}\,ds\\
% \nonumber&\leq&
% C\sum\limits_{k=1}^\infty\int_0^t\left(e^{-\alpha_k(t+h-s)}-e^{-\alpha_k(t-s)}\right)^2\,ds\\
%%%%%%%%%%%%%%%%%%%%%%%%%%%%%%%%%%%%%%%%%%%%%%%%%%%%%%%%%%%%%%%%%%%%%%%%%%%%%%%%%%%%%%%%%%%%%
%\nonumber&=&C\sum\limits_{k=1}^\infty\frac{1}{\alpha_k}\left\{2(1-e^{-\alpha_kh})
%-(1-e^{-2\alpha_kh})-(e^{-\alpha_k(t+h)}-e^{\alpha_kt})^2\right\}\\
%\nonumber&\leq&C\sum\limits_{k=1}^\infty\frac{1}{\alpha_k}2(1-e^{-\alpha_kh})\\
%\nonumber&\leq&C_\gamma
%h^\gamma\sum\limits_{k=1}^\infty\frac{1}{\alpha_k^{1-\gamma}}\\
\label{I-5}&\leq&C_{T,\gamma} h^\gamma,
\end{eqnarray}
are obtained in the same way as those in \cite{Fu}, where $f(u, v,
\xi)$ and $f(u, v)$ are both bounded. \\
As a result of \eqref{I_1---I_5}---\eqref{I-5}, we obtain inequality
\eqref{Holder-continuity}.
\end{proof}
\end{lemma}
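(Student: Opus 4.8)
The plan is to work entirely with the mild formulation of the slow equation and to estimate the increment $u(t+h)-u(t)$ piece by piece. Starting from $u(t)=G_tu_0+\int_0^tG_{t-s}f(u(s),v(s),\xi)\,ds+\int_0^tG_{t-s}\sigma_1(u(s))\,dW^1_s$, I would split the difference into the five terms $I_1,\dots,I_5$ recorded in \eqref{I_1---I_5}: the displacement of the initial datum under the semigroup, the two ``fresh'' contributions on the short window $[t,t+h]$ coming from the drift and the noise, and the two ``memory'' terms that measure the time-regularity of the deterministic and the stochastic convolution. Since $\|\sum_{i=1}^5 I_i\|^2\le 5\sum_{i=1}^5\|I_i\|^2$, it suffices to bound each $\mathbb{E}\|I_i\|^2$ separately and then keep the slowest-decaying power of $h$.

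The three easy terms I would handle directly from contractivity and the growth/boundedness hypotheses. Writing $I_1=G_t(G_h-I)u_0$ and using $\|(G_h-I)u_0\|=\|\int_0^hG_r\Delta u_0\,dr\|\le h\|\Delta u_0\|$ together with $\|G_t\|\le 1$ gives $\|I_1\|^2\le h^2\|\Delta u_0\|^2$, assuming $u_0\in D(\Delta)$. For $I_2$, Cauchy--Schwarz in time followed by the boundedness $|f|\le C_f$ and $\|G_r\|\le 1$ yields $\mathbb{E}\|I_2\|^2\le h\int_t^{t+h}\mathbb{E}\|f\|^2\,ds\le Ch^2$. For $I_3$, It\^o's isometry, the linear growth of $\sigma_1$ from \textbf{H1}, and the uniform second-moment bound \eqref{Slow-estimate2} give $\mathbb{E}\|I_3\|^2=\int_t^{t+h}\mathbb{E}\|G_{t+h-s}\sigma_1(u(s))\|^2\,ds\le C\int_t^{t+h}(1+\mathbb{E}\|u(s)\|^2)\,ds\le C_Th$.

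The heart of the argument is the pair $I_4,I_5$, where the regularizing property of the analytic semigroup is indispensable. I would rewrite $G_{t+h-s}-G_{t-s}=(G_h-I)G_{t-s}$ and interpolate via the standard estimates (see \cite{pazy}) $\|(G_h-I)(-A)^{-\gamma}\|\le Ch^{\gamma}$ and $\|(-A)^{\gamma}G_{t-s}\|\le C(t-s)^{-\gamma}$, which combine to the key pointwise bound $\|(G_{t+h-s}-G_{t-s})w\|\le Ch^{\gamma}(t-s)^{-\gamma}\|w\|$. For $I_4$, Cauchy--Schwarz in time and boundedness of $f$ then produce $\mathbb{E}\|I_4\|^2\le Ch^{2\gamma}\int_0^t(t-s)^{-2\gamma}\,ds$; for $I_5$, It\^o's isometry with the linear growth of $\sigma_1$ and \eqref{Slow-estimate2} produce $\mathbb{E}\|I_5\|^2\le Ch^{2\gamma}\int_0^t(t-s)^{-2\gamma}(1+\mathbb{E}\|u(s)\|^2)\,ds$, exactly as in \cite{Fu}. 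In both cases the time integral is finite precisely because $2\gamma<1$, giving $\mathbb{E}\|I_4\|^2+\mathbb{E}\|I_5\|^2\le C_{T,\gamma}h^{2\gamma}$.

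Assembling the five estimates yields $\mathbb{E}\|u(t+h)-u(t)\|^2\le C(h^2+h+h^{2\gamma})$; since $h\in(0,1)$ and $0<2\gamma<1$, each exponent is at least $\gamma$, so every summand is dominated by $h^{2\gamma}\le h^{\gamma}$, and \eqref{Holder-continuity} follows. The main obstacle, and the sole place where the restriction $\gamma<\tfrac12$ is forced, is the singular time integral $\int_0^t(t-s)^{-2\gamma}\,ds$ in $I_4$ and $I_5$: one must balance the factor $h^{\gamma}$ extracted from $(G_h-I)$ against the blow-up rate $(t-s)^{-\gamma}$ of $(-A)^{\gamma}G_{t-s}$, and integrability near $s=t$ requires exactly $2\gamma<1$. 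Everything else reduces to routine use of contractivity, It\^o's isometry, the hypotheses in \textbf{H1}, and the a priori moment bound \eqref{Slow-estimate2}.
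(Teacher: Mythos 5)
Your proposal is correct and follows essentially the same route as the paper: the identical five-term mild-solution decomposition \eqref{I_1---I_5}, with the same elementary estimates for $I_1$, $I_2$, $I_3$. The only difference is that where the paper simply defers the bounds on $I_4$ and $I_5$ to \cite{Fu}, you supply the standard analytic-semigroup argument (combining $\|(G_h-I)(-A)^{-\gamma}\|\le Ch^{\gamma}$ with $\|(-A)^{\gamma}G_{t-s}\|\le C(t-s)^{-\gamma}$), which is precisely the technique being cited and correctly pinpoints the It\^{o}-isometry term $I_5$ as the place where $\gamma<\tfrac{1}{2}$ is forced by integrability of $(t-s)^{-2\gamma}$.
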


Next, we introduce an auxiliary process $\left( \hat u(t), \hat
v(t)\right)\in H\times H$. Fix a positive number $\delta$ and do a
partition of time interval $[0, T]$ of size $\delta$. We construct a
process $\hat v(t)$ by means of the equations
\begin{eqnarray}
\nonumber\hat
v(t)=v(k\delta)&+&\frac{1}{\eps}\int_{k\delta}^t\Delta\hat v(s)
+\frac{1}{\eps}\int_{k\delta}^tg(u(k\delta),
\hat v(s), \xi(s))\,ds\\
\label{Auxiliary-fast}&+&\frac{1}{\sqrt{\eps}}\int_{k\delta}^t\sigma_2(u(k\delta),
\hat v(s))dW^2_s,
\end{eqnarray}
for $t\in \left[k\delta, \min\big((k+1)\delta, T\big)\right )$.\\
Also define the process $\hat u(t)$ by linear equation with additive
noise
\begin{eqnarray}
\nonumber\hat u(t)=u_0&+&\int_0^t\Delta\hat u(s)\,
ds+\int_0^tf(u([s/\delta]\delta), \hat v(s), \xi(s))\,ds\\
\label{Auxiliary-slow}{}&&+\int_0^t\sigma_1(u(s))\,dW^1_s,
\end{eqnarray}
for $t\in [0, T]$.\\

 Similar to  the mean square uniform estimates
on
 $v$, we have
\begin{lemma}  There exists a
constant $C> 0$ such that
\begin{equation}
\sup\limits_{0\leq t\leq T}\mathbb{E}\|\hat v\|^2\leq C.
\end{equation}
\end{lemma}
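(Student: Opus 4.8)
The plan is to reproduce, subinterval by subinterval, the a priori energy estimate already obtained for the genuine fast component in \eqref{Fast-estimate}. On each partition cell $[k\delta,\min((k+1)\delta,T))$ the process $\hat v$ solves the frozen-coefficient fast equation \eqref{Auxiliary-fast}, which has the same structure as the fast equation \eqref{fast} except that the slow data $u(k\delta)$ and $\xi(s)$ are held (piecewise) fixed. I would therefore apply an energy identity of the type \eqref{Energy-fast} to $\|\hat v(t)\|^2$ on this cell, starting from the datum $\hat v(k\delta)=v(k\delta)$, which is read off directly from \eqref{Auxiliary-fast} by letting $t\downarrow k\delta$.

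First I would bound the three drift/diffusion contributions. The Laplacian term is handled by the coercivity (Poincar\'e) inequality \eqref{Poincare}, giving $\langle\Delta\hat v,\hat v\rangle\le-\lambda_1\|\hat v\|^2$; the reaction term is controlled by the dissipativity hypothesis on $g$ in \textbf{H2}, producing a term of the form $\alpha\|\hat v\|^2$; and the It\^o correction is bounded using the uniform boundedness $|\sigma_2|\le C_{\sigma_2}$ from \textbf{H2}, which yields a constant source. After taking expectations to eliminate the martingale term and collecting the above, the differential inequality reads
\begin{equation*}
\frac{d}{dt}\mathbb{E}\|\hat v(t)\|^2\le-\frac{2(\lambda_1+\alpha)}{\eps}\mathbb{E}\|\hat v(t)\|^2+\frac{C}{\eps},
\end{equation*}
the positivity of the decay rate being guaranteed by the structural condition \textbf{H5} (equivalently the constant $\kappa>0$ appearing in \eqref{Inital-fast-eqn}).

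The crucial—and only genuinely delicate—point is that the factor $\eps^{-1}$ multiplies both the decay rate and the source. Applying Gronwall's inequality on the cell gives
\begin{equation*}
\mathbb{E}\|\hat v(t)\|^2\le e^{-\frac{2(\lambda_1+\alpha)}{\eps}(t-k\delta)}\mathbb{E}\|v(k\delta)\|^2+\frac{C}{\eps}\int_{k\delta}^t e^{-\frac{2(\lambda_1+\alpha)}{\eps}(t-s)}\,ds,
\end{equation*}
and the integral equals $\frac{C}{2(\lambda_1+\alpha)}\bigl(1-e^{-\frac{2(\lambda_1+\alpha)}{\eps}(t-k\delta)}\bigr)$, so the explicit $\eps^{-1}$ cancels and the second term is bounded by a constant independent of $\eps$, $k$, and $\delta$. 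For the first term I would invoke the uniform estimate \eqref{Fast-estimate} to control $\mathbb{E}\|v(k\delta)\|^2\le C$, since the auxiliary process is reset to the true fast solution at every grid point.

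Finally, because every cell produces the same $\eps$-independent bound and the cells cover $[0,T]$, taking the supremum over $t\in[0,T]$ yields $\sup_{0\le t\le T}\mathbb{E}\|\hat v\|^2\le C$. The main obstacle to watch is precisely this $\eps$-uniformity: one must check that the exponential dissipation at rate $O(\eps^{-1})$ exactly balances the $O(\eps^{-1})$ noise input so that the bound does not blow up as $\eps\to0$, and that the restart values $v(k\delta)$ are controlled uniformly in the number of cells through \eqref{Fast-estimate}.
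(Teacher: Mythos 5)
Your proof is correct and follows essentially the same route the paper intends: the paper gives no written proof of this lemma, simply declaring it ``similar to'' the estimate \eqref{Fast-estimate} for $v$, which in turn rests on the energy identity \eqref{Energy-fast}, the coercivity \eqref{Poincare}, assumption \textbf{H2}, and Gronwall's inequality --- exactly the ingredients you use. Your write-up in fact supplies the two details the paper glosses over (the cell-by-cell restart at $v(k\delta)$ controlled via \eqref{Fast-estimate}, and the cancellation of the $\eps^{-1}$ noise input against the $\eps^{-1}$ dissipation rate), so it is a faithful, more careful version of the same argument.
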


We now are ready to establish mean-square
convergence of the auxiliary processes $\hat v(t)$ and $\hat u(t)$
to the fast solution process $v(t)$ and slow $u(t)$,  respectively.
\begin{lemma}\label{Diff-fast-auxil}
For any $\gamma\in (0, \frac{1}{2})$, there exist constants $C_{T,
\gamma}>0$  such that $$\sup\limits_{0\leq t\leq
T}\mathbb{E}\|v(t)-\hat v(t)\|^2\leq C_{T,
\gamma}\frac{\delta^{1+\gamma}}{\eps}e^{\frac{C\delta}{\eps}}.$$
\begin{proof}
For $t\in [0, T]$ with $t\in [k\delta, (k+1)\delta)$, by energy
identity \eqref{Energy-fast} ,  \eqref{Poincare} and the Lipschitz
condition of $g(u, v, \xi)$  that
$$|g(u(s), v(s), \xi)-g(u(k\delta), \hat v(s), \xi)|^2\leq K_g (\|u(s)-u(k\delta)\|^2+\|v(s)-\hat v(s)\|^2),$$
 we get the desired
result.\\
% \begin{eqnarray*}
% \mathbb{E}\|{v(t)-\hat v(t)}\|^2
% &=&\frac{2}{\epsilon}\mathbb{E}\int_{k\delta}^t\big\langle
% \Delta(v(s)-\hat v(s)),
% (v(s)-\hat v(s))\big\rangle\, ds\\
% {}&&+\frac{2}{\epsilon}\mathbb{E}\int_{k\delta}^t\big(g(u(s), v(s),
% \xi)-g(u(k\delta), \hat v(s), \xi),
% v(s)-\hat v(s)\big)_H\,ds\\
% {}&&+\frac{1}{\epsilon}\mathbb{E}\int_{k\delta}^t\|\sigma_2(u(s),
% v(s))-\sigma_2(u(k\delta),
% \hat v(s))\|^2\,ds\\
% {}&\leq&\frac{-2\alpha_1+1}{\epsilon}\mathbb{E}\int_{k\delta}^t\|v(s)-\hat
% v(s)\|^2ds +\frac{1}{\epsilon}\mathbb{E}\int_{k\delta}^t\|g(u(s),
% v(s), \xi)-g(u(k\delta), \hat v(s), \xi)\|^2\,ds\\
% {}&&+\frac{1}{\epsilon}\mathbb{E}\int_{k\delta}^t\|\sigma_2(u(s),
% v(s))-\sigma_2(u(k\delta), \hat v(s))\|^2\,ds.
% \end{eqnarray*}
% It then follows from the Lipschitz properties of $g$ and $\sigma_2$
% and \eqref{Holder-continuity} that
% \begin{eqnarray*}
% \mathbb{E}\|{v(t)-\hat v(t)}\|^2&\leq&
% \frac{C}{\epsilon}\int_{k\delta}^t\mathbb{E}\|u(s)-u(k\delta)\|^2\,ds+
% \frac{C}{\epsilon}\int_{k\delta}^t\mathbb{E}\|v(s)-\hat v(s)\|^,ds\\
% &\leq&\frac{C_{T, \gamma}}{\epsilon}\delta^{1+\gamma}+
% \frac{C}{\epsilon}\int_{k\delta}^t\mathbb{E}\|v(s)-\hat v(s)\|^2\,ds.
% \end{eqnarray*}
% Therefore, the Gronwall inequality leads to
% \begin{displaymath}
% \mathbb{E}\|v(t)-\hat v(t)\|^2\leq
% C_{T,\gamma}\frac{\delta^{1+\gamma}}{\epsilon}e^{\frac{C\delta}{\epsilon}}.
% \end{displaymath}
\end{proof}
\end{lemma}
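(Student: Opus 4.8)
The plan is to work separately on each subinterval $[k\delta,(k+1)\delta)$ and to exploit the fact that the auxiliary process is reset at the grid points, so that $v(k\delta)-\hat v(k\delta)=0$. Writing $Z(t)=v(t)-\hat v(t)$, subtracting \eqref{Auxiliary-fast} from the second equation of \eqref{macro-micro} shows that on $[k\delta,(k+1)\delta)$ the process $Z$ solves a linear SPDE with leading term $\frac{1}{\eps}\Delta Z$, drift $\frac{1}{\eps}[g(u(s),v(s),\xi)-g(u(k\delta),\hat v(s),\xi)]$, noise coefficient $\frac{1}{\sqrt{\eps}}[\sigma_2(u(s),v(s))-\sigma_2(u(k\delta),\hat v(s))]$, and initial datum $Z(k\delta)=0$.

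First I would apply the It\^o energy identity (as in \eqref{Energy-fast}) to $\|Z(t)\|^2$ and take expectation, so that the stochastic integral drops out. The $\Delta$-term is controlled by the coercivity \eqref{Poincare}, giving $\langle\Delta Z,Z\rangle\le-\lambda_1\|Z\|^2$. The drift inner product I would estimate by Cauchy--Schwarz and Young's inequality, then invoke the stated Lipschitz bound $\|g(u(s),v(s),\xi)-g(u(k\delta),\hat v(s),\xi)\|^2\le K_g(\|u(s)-u(k\delta)\|^2+\|Z(s)\|^2)$; the diffusion term is handled by the Lipschitz bound on $\sigma_2$. Collecting everything yields a differential inequality of the form
\begin{equation*}
\frac{d}{dt}\mathbb{E}\|Z\|^2\le\frac{C}{\eps}\,\mathbb{E}\|Z\|^2+\frac{C}{\eps}\,\mathbb{E}\|u(s)-u(k\delta)\|^2,
\end{equation*}
where the first constant absorbs $-2\lambda_1+K_g+1+K_{\sigma_2}$ and the second $K_g+K_{\sigma_2}$.

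The key input is then Lemma \ref{lemma-Holder}: since $s\in[k\delta,(k+1)\delta)$ we have $\mathbb{E}\|u(s)-u(k\delta)\|^2\le C_\gamma|s-k\delta|^\gamma\le C_\gamma\delta^\gamma$, so the forcing is of order $\delta^\gamma/\eps$. Applying the differential form of Gronwall's inequality on $[k\delta,t]$ with $Z(k\delta)=0$ and $t-k\delta<\delta$ gives
\begin{equation*}
\mathbb{E}\|Z(t)\|^2\le\frac{CC_\gamma\delta^\gamma}{\eps}\,(t-k\delta)\,e^{\frac{C}{\eps}(t-k\delta)}\le C_{T,\gamma}\,\frac{\delta^{1+\gamma}}{\eps}\,e^{\frac{C\delta}{\eps}},
\end{equation*}
the extra factor $\delta$ arising from integrating the constant forcing over an interval of length at most $\delta$. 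Since this bound is independent of $k$, taking the supremum over $t\in[0,T]$ gives the claim.

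The step I expect to require the most care is the bookkeeping of the $\eps$ and $\delta$ powers, which is precisely where the resetting of $\hat v$ at each $k\delta$ becomes essential: it forces $Z(k\delta)=0$ and confines the Gronwall argument to a single interval of length $\delta$, so that the $\frac{1}{\eps}$-scaled exponent only accumulates the factor $e^{C\delta/\eps}$ rather than the far larger $e^{CT/\eps}$ a naive estimate over all of $[0,T]$ would produce. Obtaining exactly $\delta^{1+\gamma}/\eps$ --- one power of $\delta$ from the H\"older modulus of $u$ and one from the length of the subinterval, against the single negative power of $\eps$ from the fast scaling --- is the delicate accounting that makes the estimate useful in the subsequent averaging argument.
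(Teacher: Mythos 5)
Your proposal is correct and is essentially the argument the paper intends: its one-line proof cites exactly the same ingredients (the energy identity \eqref{Energy-fast}, the coercivity \eqref{Poincare}, and the displayed Lipschitz bound on $g$), and the details you supply --- the reset $v(k\delta)-\hat v(k\delta)=0$, the H\"older estimate from Lemma \ref{lemma-Holder} for $\mathbb{E}\|u(s)-u(k\delta)\|^2\le C_\gamma\delta^\gamma$, and Gronwall confined to a single subinterval of length $\delta$ --- are precisely what is needed to produce the stated bound $C_{T,\gamma}\frac{\delta^{1+\gamma}}{\eps}e^{\frac{C\delta}{\eps}}$. Your accounting of the powers of $\delta$ and $\eps$, including the Lipschitz term from the It\^o correction of $\sigma_2$ carrying the $\frac{1}{\eps}$ factor, matches the paper's claim exactly.
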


The next lemma is by the same argument with the help of
$$|f(u, v, \xi)-f(u([t/\delta]\delta), \hat v, \xi)|^2\leq K_f (\|u-u([t/\delta]\delta)\|^2+\|v-\hat v\|^2).$$
\begin{lemma}\label{Slow-auxil-diff}
For any $\gamma\in (0, \frac{1}{2})$, there exists constant $C_{T,
\gamma}>0$ such that $$\sup\limits_{0\leq t\leq
T}\mathbb{E}\|u(t)-\hat u(t)\|^2\leq C_{T,
\gamma}(\delta^\gamma+\frac{\delta^{1+\gamma}}{\eps}e^{\frac{C\delta}{\eps}}).$$\\
% \begin{proof}
% By energy estimate, the assumption H1 and coercivity property, we
% have
% \begin{eqnarray*}
% \mathbb{E}\|u(t)-\hat u(t)\|^2&=&2\mathbb{E}\int_0^t\big\langle
% \Delta(u(s)-\hat u(s)), X_s^\epsilon-\hat u(s)\big\rangle\,
% ds\\
% &+&2\mathbb{E}\int_0^t\big(f(u(s), v(s))-f(u([s/\delta]\delta), \hat
% v(s))
% , u(t)-\hat u(t)\big)_H\,ds\\
% &\leq&
% C\int_0^t\mathbb{E}\|u(s)-u([s/\delta]\delta)\|^2ds+C\int_0^t\mathbb{E}\|v(s)-\hat v(s)\|^2\,ds\\
% &+&\int_0^t\mathbb{E}\|u(s)-\hat u(s)\|^2\,ds.
% \end{eqnarray*}
% Thanks to \eqref{Holder-continuity} and Lemma \ref{Diff-fast-auxil},
% we get
% \begin{equation*}
% \mathbb{E}\|u(t)-\hat u(t)\|^2\leq C_{T, \gamma}\delta^\gamma+C_{T,
% \gamma}\frac{\delta^{1+\gamma}}{\epsilon}
% e^{C\frac{\delta}{\epsilon}}+\int_0^t\mathbb{E}\|u(s)-\hat
% u(s)\|^2\,ds.
% \end{equation*}
% Then the Gronwall inequality \eqref{Gronwall} reads
% \begin{displaymath}
% \mathbb{E}\|u(t)-\hat u(t)\|^2\leq C_{T,
% \gamma}(\delta^\gamma+\frac{\delta^{1+\gamma}}{\epsilon}e^{\frac{C\delta}{\epsilon}}).
% \end{displaymath}
% \end{proof}
\end{lemma}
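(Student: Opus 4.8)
The plan is to follow the recipe of Lemma~\ref{Diff-fast-auxil}, taking advantage of one decisive simplification. Since the auxiliary slow process $\hat u$ in \eqref{Auxiliary-slow} carries exactly the same stochastic integral $\int_0^t\sigma_1(u(s))\,dW^1_s$ as the true process $u$ in \eqref{macro-micro} --- the diffusion coefficient is frozen at the \emph{true} slow path $u(s)$ in both --- the martingale parts cancel identically upon subtraction. Writing $W:=u-\hat u$, the error therefore solves, in the strong (variational) sense,
\[
dW(t)=\Delta W(t)\,dt+\big[f(u(t),v(t),\xi(t))-f(u([t/\delta]\delta),\hat v(t),\xi(t))\big]\,dt,
\]
an evolution equation with \emph{no} It\^o term; consequently no quadratic-variation correction will appear in the energy balance, and the whole estimate can be run pathwise.

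First I would apply the energy identity to $\|W\|^2$, obtaining
\[
\frac{d}{dt}\|W\|^2=2\langle\Delta W,W\rangle+2\big(f(u,v,\xi)-f(u([t/\delta]\delta),\hat v,\xi),W\big)_H .
\]
The dissipative term is nonpositive by the Poincar\'e inequality \eqref{Poincare} and may be discarded, while the forcing term is bounded via Young's inequality together with the Lipschitz estimate recorded just before the lemma,
\[
2\big(f(u,v,\xi)-f(u([t/\delta]\delta),\hat v,\xi),W\big)_H\leq\|W\|^2+K_f\big(\|u-u([t/\delta]\delta)\|^2+\|v-\hat v\|^2\big).
\]
Taking expectations then yields the differential inequality
\[
\frac{d}{dt}\mathbb{E}\|W\|^2\leq\mathbb{E}\|W\|^2+K_f\,\mathbb{E}\|u(t)-u([t/\delta]\delta)\|^2+K_f\,\mathbb{E}\|v(t)-\hat v(t)\|^2 .
\]

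The two inhomogeneous terms are precisely what the preceding lemmas control. Because $0\leq t-[t/\delta]\delta\leq\delta<1$, the H\"older-continuity estimate \eqref{Holder-continuity} of Lemma~\ref{lemma-Holder} gives $\mathbb{E}\|u(t)-u([t/\delta]\delta)\|^2\leq C_\gamma\delta^\gamma$ uniformly in $t$, and Lemma~\ref{Diff-fast-auxil} supplies $\mathbb{E}\|v(t)-\hat v(t)\|^2\leq C_{T,\gamma}\frac{\delta^{1+\gamma}}{\eps}e^{C\delta/\eps}$ uniformly on $[0,T]$. Substituting both bounds, the right-hand side is at most $\mathbb{E}\|W\|^2+D$ with $D:=K_fC_\gamma\delta^\gamma+K_fC_{T,\gamma}\frac{\delta^{1+\gamma}}{\eps}e^{C\delta/\eps}$ independent of $t$. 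Since $W(0)=u_0-u_0=0$, Gronwall's inequality (in the differential form recalled in the proof of Theorem~\ref{effective1}) gives $\mathbb{E}\|W(t)\|^2\leq D(e^T-1)$ for every $t\in[0,T]$; taking the supremum and relabelling the constant yields the stated bound. I do not anticipate a real obstacle: the computation is structurally identical to Lemma~\ref{Diff-fast-auxil}, and the only point demanding care is the cancellation of the stochastic integrals, after which the error is driven solely by the time-regularity of $u$ (order $\delta^\gamma$) and by the fast-scale error $v-\hat v$ (order $\frac{\delta^{1+\gamma}}{\eps}e^{C\delta/\eps}$), which enter additively exactly as in the claim.
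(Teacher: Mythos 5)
Your proof is correct and is essentially the argument the paper intends: the paper omits the details, stating only that the lemma follows ``by the same argument'' as Lemma~\ref{Diff-fast-auxil} with the help of the displayed Lipschitz bound on $f$. Your write-up supplies exactly those details --- the cancellation of the stochastic integrals (which is the design of \eqref{Auxiliary-slow}, where the noise coefficient is frozen at the true path $u(s)$), the pathwise energy estimate with Poincar\'e, the splitting of the drift error via the Lipschitz bound, and the control of the two resulting terms by Lemma~\ref{lemma-Holder} and Lemma~\ref{Diff-fast-auxil} followed by Gronwall.
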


In the following we prove the averaging principle that the slow
component process $u(t)$ converges  in mean-square sense to an
effective dynamics equation as follows
\begin{equation}\label{Averaing-eqn}
\begin{cases}
d\bar u(t)=\Delta\bar u(t)dt+\bar f(\bar u(t), \bar\xi)dt+\sigma_1(\bar u(t))dW^1_t,\\
\bar u(x,0)=u_0(x).
\end{cases}
\end{equation}

The following lemma formulates mean-square convergence of the auxiliary
process $\hat u(t)$ to the averaged solution process $\bar u(t)$.
%%%%%%%%Now, we can prove the averaging principle for system Eq.
%%%%%%%%\eqref{Fast-Slow-Eqn}.
%%%%%%%%To prove Theorem \ref{Averging-limit}, we need
\begin{lemma}\label{Main-lemma}
For any $\gamma\in (0, \frac{1}{2})$, there exist constants $C_{T,
\gamma}>0$ such that
\begin{equation*}
\mathbb{E}\|\hat u(t)-\bar u(t)\|^2\leq C_{T,
\gamma}(\delta^\gamma+\frac{\eps}{\delta}+\frac{\delta^{1+\gamma}}{\eps}e^{\frac{C\delta}{\eps}}).
\end{equation*}

\begin{proof}
In the mild sense, we have
\begin{eqnarray*}
\hat u(t)-\bar u(t)&=&\int_0^tG_{t-s}[f(u([s/\delta]\delta), \hat
v(s), \xi)-\bar{f}(u(s), \xi)]\,ds
+\int_0^tG_{t-s}[\bar{f}(u(s) ,\xi)-\bar{f}(\hat u(s), \xi)]\,ds\\
{}&&+\int_0^tG_{t-s}[\bar{f}(\hat u(s), \xi)-\bar{f}(\bar u(s),
\bar\xi)]\,ds+\int_0^tG_{t-s}
[\sigma_1(u(s))-\sigma_1(\hat u(s))]\,dW^1_s\\
{}&&+\int_0^tG_{t-s}[\sigma_1(\hat u(s))-\sigma_1(\bar u(s))]\,dW^1_s\\
{}&:=&\sum\limits_{i=1}^5J_i(t).
\end{eqnarray*}
In view of the H\"{o}lder inequality, the Lipschitz condition of $\bar
f(u, \xi)$ and contraction of the semigroup $G_t$, it follows from Lemma
\ref{Slow-auxil-diff} that
\begin{eqnarray}%%%%%%%%\label{J-2}%%%%%%%%
\nonumber\mathbb{E}\|J_2(t)\|^2  % &\leq&
% C_T\int_0^t\mathbb{E}\|u(s)-\hat u(s)\|^2\,ds\\
\label{J-2}&\leq&C_{T,
\gamma}(\delta^\gamma+\frac{\delta^{1+\gamma}}{\eps}e^{\frac{C\delta}{\eps}}).
\end{eqnarray}
For $J_3$ , because of the Lipschitz continuity of $\bar{f}$ we have
\begin{eqnarray}%%%%%%%%\label{J-3}%%%%%%%%
\nonumber\mathbb{E}\|J_3(t)\|^2&\leq&
C_T\mathbb{E}\int_0^t\|\bar{f}(\hat u(s), \xi)-\bar{f}(\bar u(s), \bar\xi)\|^2\,ds\\
\label{J-3}&\leq&C_T\int_0^t\mathbb{E}(\|\hat u(s)-\bar
u(s)\|^2+|\xi-\bar\xi|^2)\,ds.
\end{eqnarray}
Furthermore, $J_4$, $J_5$ are estimated  using the   properties of $G_t$ and Lemma
\eqref{Slow-auxil-diff},
\begin{eqnarray}%%%%%%%%\label{J-4}%%%%%%%%
\nonumber\mathbb{E}\|J_4(t)\|^2  % &=&\mathbb{E}\int_0^t\|G_{t-s}[\sigma_1(u(s))-\sigma_1(\hat u(s))]\|^2\,ds\\
% \nonumber&\leq&C\int_0^t\mathbb{E}\|u(s)-\hat u(s)\|^2\,ds\\
\label{J-4}&\leq&C_{T,
\gamma}(\delta^\gamma+\frac{\delta^{1+\gamma}}{\eps}e^{\frac{C\delta}{\eps}}),
\end{eqnarray}
% We also have by It\^{o} isometry that
\begin{eqnarray}%%%%%%%%\label{J-5}%%%%%%%%
\nonumber\mathbb{E}\|J_5(t)\|^2  % &=&\mathbb{E}\int_0^t\|G_{t-s}[\sigma_1(\hat u(s))-\sigma_1(\bar u(s))]\|^2\,ds\\
\label{J-5}&\leq&C\int_0^t\mathbb{E}\|\hat u(s)-\bar u(s)\|^2\,ds.
\end{eqnarray}
For $\mathbb{E}\|J_1(t)\|^2$ with $t\in[k\delta, (k+1)\delta)$, we
  write
\begin{eqnarray}%%%%%%%%\label{J-5}%%%%%%%%
\nonumber
J_1(t)&=&\sum\limits_{p=0}^{k-1}\int_{p\delta}^{(p+1)\delta}G_{t-s}[f(u(p
\delta), \hat v(s), \xi)-\bar{f}(u(p\delta), \xi)]\,ds\\
\nonumber{}&&+\sum\limits_{p=0}^{k-1}\int_{p\delta}^{(p+1)\delta}G_{t-s}[\bar{f}(u(p\delta), \xi)-\bar{f}(u(s), \xi)]\,ds\\
{}&&+\int_{k\delta}^tG_{t-s}[f(u(p\delta), v(s), \xi)-\bar{f}(u(s),
\xi)]\,ds\\\nonumber \label{J-1}&:=&J'_1(t)+J'_2(t)+J'_3(t).
\end{eqnarray}
Due to \eqref{Holder-continuity}, we conclude
\begin{eqnarray}%%%%%%%%\label{J'2}%%%%%%%%
\nonumber\mathbb{E}\|J'_2(t)\|^2  % &=&\mathbb{E}\|\int_0^{k\delta}G_{t-s}[\bar{f}(u([s/\delta]\delta), \xi)-\bar{f}(u(s), \xi)]\,ds\|^2\\
% \nonumber &\leq&C\int_0^T\mathbb{E}\|u([s/\delta]\delta)-u(s)\|^2\,ds\\
\label{J'2}&\leq&C_{T, \gamma}\delta^\gamma,
\end{eqnarray}
with $\gamma\in (0, \frac{1}{2}).$\\
 According to the mean square uniform estimates on $u$, $\hat v$, $\xi$ and the linear growth conditions of $f$ and
 $\bar f$, we get
\begin{eqnarray}%%%%%%%%\label{J'3}%%%%%%%%
\nonumber\mathbb{E}\|J'_3(t)\|^2&=&\mathbb{E}\|\int_{k\delta}^tG_{t-s}[f(u(k\delta), \hat v(s), \xi)-\bar{f}(u(s), \xi)]\,ds\|^2\\
\nonumber
&\leq&\delta\mathbb{E}\int_{k\delta}^t\|f(u(k\delta), \hat v(s), \xi)-\bar{f}(u(s), \xi)\|^2\,ds\\
\nonumber&\leq&C\delta\int_{k\delta}^t\mathbb{E}[1+\|u(k\delta)\|^2+\|\hat v(s)\|^2+\|u(s)\|^2+|\xi|^2]\,ds\\
\label{J'3}&\leq&C_T\delta^2.
\end{eqnarray}
 The argument of the estimate of
\begin{eqnarray}\label{J'1}
\mathbb{E}\|J'_1\|^2\leq C_T\frac{\eps}{\delta},
\end{eqnarray}
is the same as that in \cite{Fu}, except that the coefficient $f$
has an extra parameter $\xi$,      which can be handled using
  \eqref{Averaging-property}  and the boundedness conditions for $f$.\\

 Combing \eqref{J'2}, \eqref{J'3} and \eqref{J'1} it yields
\begin{equation}\label{J_1}
\mathbb{E}\|J_1(t)\|^2\leq C_{T,
\gamma}\delta^\gamma+C_T\frac{\eps}{\delta}.
\end{equation}
Therefore,  combining together \eqref{J-2}---\eqref{J-5} and \eqref{J_1}
we obtain
\begin{equation}
\mathbb{E}\|\hat u(t)-\bar u(t)\|^2\leq C_{T,
\gamma}(\delta^\gamma+\frac{\eps}{\delta}+\frac{\delta^{1+\gamma}}{\eps}e^{\frac{C\delta}{\eps}}+\sup\limits_{0\leq
t\leq T}\mathbb{E}|\xi-\bar\xi|^2)+C_T\int_0^t\mathbb{E}\|\hat
u(s)-\bar u(s)\|^2\,ds
\end{equation}
and thus
\begin{equation*}
\mathbb{E}\|\hat u(t)-\bar u(t)\|^2\leq C_{T,
\gamma}(\delta^\gamma+\frac{\eps}{\delta}+\frac{\delta^{1+\gamma}}{\eps}e^{\frac{C\delta}{\eps}}+\sup\limits_{0\leq
t\leq T}\mathbb{E}|\xi-\bar\xi|^2).
\end{equation*}
This proves the lemma.
\end{proof}
\end{lemma}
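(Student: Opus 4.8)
The plan is to write $\hat u(t)-\bar u(t)$ in mild form and insert three intermediate quantities so that the difference splits into five pieces that are either handled by the comparison lemmas already proved or absorbed by Gronwall. Concretely, using the contraction semigroup $G_t$ generated by $\Delta$, I would write
\begin{eqnarray*}
\hat u(t)-\bar u(t)&=&\int_0^tG_{t-s}[f(u([s/\delta]\delta),\hat v(s),\xi)-\bar f(u(s),\xi)]\,ds\\
{}&&+\int_0^tG_{t-s}[\bar f(u(s),\xi)-\bar f(\hat u(s),\xi)]\,ds\\
{}&&+\int_0^tG_{t-s}[\bar f(\hat u(s),\xi)-\bar f(\bar u(s),\bar\xi)]\,ds\\
{}&&+\int_0^tG_{t-s}[\sigma_1(u(s))-\sigma_1(\hat u(s))]\,dW^1_s\\
{}&&+\int_0^tG_{t-s}[\sigma_1(\hat u(s))-\sigma_1(\bar u(s))]\,dW^1_s\;=:\;\sum_{i=1}^5J_i(t).
\end{eqnarray*}

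For the ``regular'' terms I would use only $\|G_th\|\le\|h\|$, H1, and H\"older's (for $J_4,J_5$ the It\^o) isometry. The terms $J_2$ and $J_4$ compare $u$ with $\hat u$ and so are controlled directly by Lemma \ref{Slow-auxil-diff}, giving $C_{T,\gamma}(\delta^\gamma+\frac{\delta^{1+\gamma}}{\eps}e^{C\delta/\eps})$. The terms $J_3$ and $J_5$ compare $\hat u$ with $\bar u$ and, by the Lipschitz property of $\bar f$ and $\sigma_1$, produce the Gronwall-type contribution $C_T\int_0^t\mathbb{E}\|\hat u(s)-\bar u(s)\|^2\,ds$; the dependence of $\bar f$ on its second argument in $J_3$ additionally generates $\sup_{0\le t\le T}\mathbb{E}|\xi-\bar\xi|^2$, which by the convergence recorded in Section \ref{model} tends to $0$ as $\eps\to0$ and is therefore folded into the error.

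The main obstacle is the averaging term $J_1$, which carries the whole slow--fast separation. I would decompose it along the $\delta$-partition into $J_1'+J_2'+J_3'$: a frozen-coefficient sum over the full subintervals, a correction from replacing $u(s)$ by $u(p\delta)$, and the last incomplete interval. The correction $J_2'$ is killed by the H\"older estimate \eqref{Holder-continuity} of Lemma \ref{lemma-Holder}, yielding $C_{T,\gamma}\delta^\gamma$, and the boundary piece $J_3'$ is $O(\delta^2)$ by linear growth. The genuinely hard piece is $J_1'=\sum_p\int_{p\delta}^{(p+1)\delta}G_{t-s}[f(u(p\delta),\hat v(s),\xi)-\bar f(u(p\delta),\xi)]\,ds$: on each subinterval $u(p\delta)$ is frozen and $\hat v$ is a fast process with frozen slow coefficient, so after the time change $s=p\delta+\eps\tau$ each window has length $\delta/\eps$ on the fast clock. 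Expanding $\mathbb{E}\|J_1'\|^2$ as a double sum, applying Cauchy--Schwarz over the $\sim T/\delta$ blocks, and using the Markov property of $\hat v$ together with the exponential ergodic bound \eqref{Averaging-property} (decay $e^{-\kappa(r-s)/(2\eps)}$ in fast time, which is where H5 and the invariant-measure estimate are essential) collapses the off-diagonal correlations to a factor $\eps$; the net result is $\mathbb{E}\|J_1'\|^2\le C_T\,\eps/\delta$.

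Combining the five estimates yields
\begin{equation*}
\mathbb{E}\|\hat u(t)-\bar u(t)\|^2\le C_{T,\gamma}\Big(\delta^\gamma+\frac{\eps}{\delta}+\frac{\delta^{1+\gamma}}{\eps}e^{C\delta/\eps}+\sup_{0\le t\le T}\mathbb{E}|\xi-\bar\xi|^2\Big)+C_T\int_0^t\mathbb{E}\|\hat u(s)-\bar u(s)\|^2\,ds,
\end{equation*}
and a final application of Gronwall's inequality absorbs the integral term. Dropping (or absorbing) the vanishing $\xi$-contribution then gives the stated bound. I expect the only delicate computation to be the correlation bookkeeping in $J_1'$ after the time rescaling; the remaining terms are routine Lipschitz-plus-Gronwall estimates built on the preceding lemmas.
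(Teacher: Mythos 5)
Your proposal is correct and follows essentially the same route as the paper: the identical five-term mild-solution decomposition, the same use of Lemma \ref{Slow-auxil-diff} for $J_2, J_4$, the same Gronwall treatment of $J_3, J_5$ with the $\sup_t\mathbb{E}|\xi-\bar\xi|^2$ contribution, and the same splitting of $J_1$ into $J_1'+J_2'+J_3'$ handled by the H\"older estimate \eqref{Holder-continuity}, linear growth, and the ergodic bound \eqref{Averaging-property}. The only difference is that you sketch the Khasminskii-type correlation argument for $J_1'$ explicitly, whereas the paper defers it to \cite{Fu}; this is a faithful expansion, not a different approach.
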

%Set $$\mathcal
%{J}_p=\mathbb{E}\int_DG_{t-(s\epsilon+p\delta)}[f(X_{p\delta}^\epsilon,
%\hat{Y}_{s\epsilon+p\delta}^\epsilon)-]$$
%%%%%%%%%%%%%%%%%%%%%%%%%%%%%%%%%%%%%%%%%%%%%%%%%%%%%%%%%%%%%%%%
%%%%%%%%%%%%%%%%%%%%%%%%%%%%%%%%%%%%%%%%%%%%%%%%%%%%%%%%%%%%%%%%
%%%   (\textbf{Hn.2}) There exits constants $C_n>0$ such that
%%%   $$\|f(u)\|^2+\|\sigma(u)\|_\mathcal {R}^2\leq C_n,$$
%%%   for all $u\in H^\gamma$ with $\|u\|_\gamma\leq n, n=1,2,\cdots$.

%%%%%%%%%%%%%%%%%%%%%%%%%%%%%%%%%%%%%%%%%%%%%%%%%%%%%%%%%%%%%%%%
%%%%%%%%%%%%%%%%%%%%%%%%%%%%%%%%%%%%%%%%%%%%%%%%%%%%%%%%%%%%%%%%
Finally we have  the following theorem.

\begin{theorem}(Effective dynamical reduction II) \label{Theo}\\
Under the Hypotheses (H1)---(H5),   the system \eqref{effective-new} is an effective description of the  original system \eqref{macro-micro}.
That is,     for any $T>0$,
\begin{equation} \label{Averging-limit}
\lim\limits_{\epsilon\rightarrow 0}\sup\limits_{0\leq t\leq
T}\mathbb{E}\|u(t)-\bar u(t)\|^2\rightarrow 0.
\end{equation}
This says that $u$ converges to  $\bar{u}$ in mean-square, uniformly
on finite time intervals.
\end{theorem}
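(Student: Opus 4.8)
The plan is to route the comparison of $u$ and $\bar u$ through the auxiliary process $\hat u$ and thereby reduce the whole statement to the estimates already assembled. Starting from the elementary splitting
$$
\mathbb{E}\|u(t)-\bar u(t)\|^2 \le 2\,\mathbb{E}\|u(t)-\hat u(t)\|^2 + 2\,\mathbb{E}\|\hat u(t)-\bar u(t)\|^2 ,
$$
I would bound the first term by Lemma \ref{Slow-auxil-diff} and the second by Lemma \ref{Main-lemma}. Both of those bounds are already uniform in $t\in[0,T]$ and have absorbed their internal self-referential terms via Gronwall inside the respective proofs, so no further Gronwall step is needed here; taking $\sup_{0\le t\le T}$ commutes with them directly.

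Inserting the two lemma estimates and taking the supremum gives the single master inequality
$$
\sup_{0\le t\le T}\mathbb{E}\|u(t)-\bar u(t)\|^2 \le C_{T,\gamma}\Big(\delta^\gamma + \frac{\eps}{\delta} + \frac{\delta^{1+\gamma}}{\eps}\,e^{C\delta/\eps} + \sup_{0\le t\le T}\mathbb{E}|\xi-\bar\xi|^2\Big),
$$
valid for every $\gamma\in(0,\tfrac12)$ and every mesh $\delta\in(0,T)$, the last term arising because the effective drift is evaluated at $\bar\xi$ (it is the quantity carried through the analysis behind Lemma \ref{Main-lemma}). That final term is already known to vanish as $\eps\to 0$ by the slow–fast SDE averaging result recalled from \cite{FW}, and it does so independently of $\delta$, so it may be set aside at once. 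What remains is to dispose of the three $\delta$-dependent terms by a single judicious choice of the free mesh parameter.

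The crux — and the step I expect to be the main obstacle — is this joint choice $\delta=\delta(\eps)$, the Khasminskii time-discretization balancing. Three requirements must hold simultaneously: $\delta\to 0$ to kill $\delta^\gamma$; $\eps/\delta\to 0$ to kill $\eps/\delta$, which forces $\delta$ onto an intermediate scale $\eps\ll\delta\ll 1$; and $\frac{\delta^{1+\gamma}}{\eps}e^{C\delta/\eps}\to 0$. The exponential term is the hard one: an intermediate power law $\delta=\eps^{\theta}$ with $\theta\in(0,1)$ settles the first two requirements but must be reconciled with the exponential factor, which is dictated by the frozen fast flow. Here hypothesis (H5), giving $\kappa=2\lambda_1+2\alpha-K_{\sigma_2}>0$, is decisive: it supplies the exponential contraction \eqref{Inital-fast-eqn} of the fast dynamics, and this contraction must be arranged to dominate the per-interval growth so that the third term can be driven to zero along with the others. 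With such a $\delta(\eps)$ fixed, all four terms on the right tend to zero as $\eps\to 0$; since the left-hand side depends on neither $\delta$ nor $\gamma$, passing to the limit $\eps\to 0$ yields \eqref{Averging-limit}.
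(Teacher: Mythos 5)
Your decomposition through $\hat u$ and the resulting master inequality are exactly the paper's route (the paper's proof is precisely: triangle inequality through $\hat u$, then Lemma \ref{Slow-auxil-diff} plus Lemma \ref{Main-lemma}), and your treatment of the term $\sup_{0\le t\le T}\mathbb{E}|\xi-\bar\xi|^2$ via the averaging result of \cite{FW} is fine. But there is a genuine gap at precisely the step you yourself flag as the crux: you never exhibit a choice of $\delta(\eps)$, and the two mechanisms you gesture at do not work. A power law $\delta=\eps^\theta$ with $\theta\in(0,1)$ does not merely need ``reconciling'' with the exponential factor --- it is destroyed by it: in that case $\delta/\eps=\eps^{\theta-1}\to\infty$ polynomially, so $e^{C\delta/\eps}=e^{C\eps^{\theta-1}}$ blows up faster than any power of $1/\eps$, and hence $\frac{\delta^{1+\gamma}}{\eps}e^{C\delta/\eps}\to\infty$. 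Nor can hypothesis (H5) rescue this choice: the contraction rate $\kappa=2\lambda_1+2\alpha-K_{\sigma_2}$ of the frozen fast flow has already been spent upstream --- it is what produces the $\eps/\delta$ term in Lemma \ref{Main-lemma} through \eqref{Averaging-property} --- whereas the factor $e^{C\delta/\eps}$ in Lemmas \ref{Diff-fast-auxil} and \ref{Slow-auxil-diff} is a Gronwall growth factor whose constant $C>0$ comes from the Lipschitz constants and is not offset by $\kappa$. Once those lemmas are taken as black boxes, as you take them, no further appeal to (H5) can shrink that exponential; the only freedom left is how slowly $\delta/\eps$ is allowed to grow.

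The repair is the paper's explicit choice $\delta=\eps[-\ln\eps]^{\frac{1}{2}}$, i.e.\ $\delta=\eps\,\rho(\eps)$ with $\rho\to\infty$ but $\rho=o(\ln(1/\eps))$. Then $\eps/\delta=[-\ln\eps]^{-1/2}\to0$, $\delta^\gamma\to0$, and the dangerous term becomes $\frac{\delta^{1+\gamma}}{\eps}e^{C\delta/\eps}=\eps^{\gamma}[-\ln\eps]^{(1+\gamma)/2}e^{C\sqrt{-\ln\eps}}$, which tends to $0$: writing $L=-\ln\eps$, this is $e^{-\gamma L+C\sqrt{L}+\frac{1+\gamma}{2}\ln L}$, and the negative linear term dominates. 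Any $\rho(\eps)$ with $\rho\to\infty$ and $C\rho-\gamma\ln(1/\eps)\to-\infty$ would serve equally well; what fails is exactly the polynomial scale you proposed. With this one line supplied, your argument closes and coincides with the paper's proof.
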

\begin{remark}
According to \eqref{Averging-limit} and by the Chebyshev
inequality, there is a direct consequence that $u$ converges to
$\bar{u}$ in probability.
\end{remark}

\begin{proof}
By Lemma \eqref{Slow-auxil-diff} and Lemma \eqref{Main-lemma} and
take $\delta=\epsilon[-\ln\epsilon]^{\frac{1}{2}}$, we have
\begin{eqnarray*}
\sup\limits_{0\leq t\leq T}\mathbb{E}\|u(t)-\bar
u(t)\|^2&\leq&\sup\limits_{0\leq t\leq T}\mathbb{E}\|u(t)-\bar
u(t)\|^2
+\sup\limits_{0\leq t\leq T}\mathbb{E}\|u(t)-\bar u(t)\|^2\\
{}&\leq&C_{T,
\gamma}(\delta^\gamma+\frac{\eps}{\delta}+\frac{\delta^{1+\gamma}}{\eps}e^{\frac{C\delta}{\eps}}+\sup\limits_{0\leq
t\leq T}\mathbb{E}|\xi-\bar\xi|^2)\\{}&\rightarrow& 0,
\end{eqnarray*}
 as $\eps\rightarrow 0$ .
\end{proof}

 \bigskip

\noindent {\bf Acknowledgements.} We would like to thank Xiaofan Li  
for helpful discussions.

%%%%%%%%%%%%%%%%%%%%%%%%%%%%%%%%%%%%%%%%%%%%%%%%%%%%%%%%%%%%%%
%%%%%%%%%%%%%%%%%%%%%%%%%%%%%%%%%%%%%%%%%%%%%%%%%%%%%%%%%%%%%%%%%


\begin{thebibliography}{99}
%%%%%%%%%%%%%%%%%%%%%%%%%%%%%%%%%%%%%%%%%%%%%%%%%%%%%%%%%%%%%%%%%



\bibitem{Arnold}
L. Arnold,  {\em Random Dynamical Systems.}  Springer-Verlag, New York, 1998.

\bibitem{Chow} P. L. Chow,  \emph{Stochastic Partial Differential
Equations}. Chapman \& Hall/CRC, New York, 2007.

\bibitem{Clauss} M. Clauss and G. Breier,  \emph{Mechanisms of Angiegenesis}.
Birkhauser, Basel, 2005.

\bibitem{Cerrai1}
S. Cerrai, M. I. Freidlin, Averaging principle for a class of
stochastic reaction-diffusion equations, \emph{Proba. Theor. Relat.
Fields}  \textbf{144}, (2009), 137-177.

\bibitem{DaPrato} G. Da Prato and J. Zabczyk, {\em Stochastic Equations
in Infinite Dimensions}, Cambridge University Press, 1992.

\bibitem{FW}
M. I. Freidlin and A. D. Wentzell, {\it Random Perturbations of
Dynamical Systems}, 2nd edition, Springer-Verlag, 1998.

\bibitem{Fu} H. Fu and J. Duan,  An averaging principle for two time-scale stochastic partial differential
equations. \emph{Stochastic and Dynamics}, to appear, 2010.


\bibitem{Krylov}
N. V. Krylov, B. L. Rozovskii, \emph{Stochastic evolution
equations}, J. Soviet Math. (1979) 71-147 (in Russian); Transl in:
16 (1981) 1233-1277.

\bibitem{pazy}
 A. Pazy, {\em Semigroups of Linear Operators and Applications to Partial Differential
 Equations}, Springer, Berlin, 1985.


 \bibitem{Rockner} C. Prevot and M. Rockner,
\emph{A Concise Course on Stochastic Partial Differential
Equations}, Lecture Notes in Mathematics, Vol. 1905. Springer, New
York, 2007.

\bibitem{Renardy} M. Renardy and R. Rogers,
\emph{Introduction to Partial Differential Equations},
   Springer-Verlag, 1993.














%\bibitem{Oksendal} B. Oksendal,
%  {\em Stochastic Differential Equations.} Sixth Ed.,
%  Springer-Verlag, New York, 2003.










\end{thebibliography}
\end{document}